\tikzset{>=stealth}
\def\@tocline#1#2#3#4#5#6#7{\relax
  \ifnum #1>\c@tocdepth 
  \else
    \par \addpenalty\@secpenalty\addvspace{#2}%
    \begingroup \hyphenpenalty\@M
    \@ifempty{#4}{%
      \@tempdima\csname r@tocindent\number#1\endcsname\relax
    }{%
      \@tempdima#4\relax
    }%
    \parindent\z@ \leftskip#3\relax \advance\leftskip\@tempdima\relax
    \rightskip\@pnumwidth plus4em \parfillskip-\@pnumwidth
    #5\leavevmode\hskip-\@tempdima
      \ifcase #1
       \or\or \hskip 2em \or \hskip 2em \else \hskip 3em \fi%
      #6\nobreak\relax
    \dotfill\hbox to\@pnumwidth{\@tocpagenum{#7}}\par
    \nobreak
    \endgroup
  \fi}
\newtheorem{intro-thm}{Theorem}[]
\theoremstyle{plain}
\newtheorem{thm}{Theorem}[section]
\newtheorem{theorem}[thm]{Theorem}
\newtheorem{lemma}[thm]{Lemma}
\theoremstyle{definition}
\newtheorem{remark}[thm]{Remark}
\newtheorem{point}[thm]{}
\newtheorem{definition}[thm]{Definition}
\newcommand{\ilim}{\mathop{\varprojlim}\limits} 
\newcommand{\Union}{\bigcup}
\newcommand{\Ker}{{\rm Ker  }}
\renewcommand{\tilde}{\widetilde}
\newcommand{\N}{{\mathbb N}}
\newcommand{\Z}{{\mathbb Z}}
\newcommand{\ab}[1]{\langle {#1} \rangle}
\newcommand{\ac}[1]{\{#1\}}
\newcommand{\Com}{{\mathsf{ComRings}}}
\begin{document}

\title{A universal group-theoretic characterisation of $p$-typical Witt vectors} 

\author[S. Pisolkar]{Supriya Pisolkar} \address{ Indian Institute of
Science, Education and Research (IISER),  Homi Bhabha Road, Pashan,
Pune - 411008, India} \email{supriya@iiserpune.ac.in} 

\author[B. Samanta] {Biswanath Samanta} \address{ Indian Institute of
Science, Education and Research (IISER),  Homi Bhabha Road, Pashan,
Pune - 411008, India} \email{biswanath.samanta@students.iiserpune.ac.in} 

\thanks{The second-named author was supported by Ph.D. fellowship (File No: 09/936(0315)/2021-
EMR-I) of Council of Scientific \& Industrial Research (CSIR), India.}

\begin{abstract} 
For a prime $p$ and a commutative ring $R$ with unity, let $W(R)$ denote the group of $p$-typical Witt vectors. The group $W(R)$ is endowed with a Verschiebung operator $W(R)\xrightarrow{V}W(R)$ and a Teichm\"{u}ller map $R\xrightarrow{\ab{ \ }}W(R)$. One of the properties satisfied by $V, \ab{ \ }$ is that the map $R \to W(R)$ given by $x\mapsto V\ab{x^p} - p\ab{x}$ is an additive map. In this paper we show that for $p\neq 2$, this property essentially characterises the functor $W$ (Theorem \ref{univcom}). Unlike other characterisations (see \cite{b}, \cite{J}), this is a group-theoretic characterisation, in the sense that it does not use the ring structure of $W(R)$. Most constructions of the group of $p$-typical Witt vectors of non-commutative rings do not have a ring structure, and hence the above characterisation is more suitable for generalisation to the non-commutative setup.
\end{abstract}
\maketitle 
\section{Introduction}
\noindent Fix a prime $p$ throughout this article. Let  $\Com$ denote the category of Commutative rings with unity and $\mathsf{Ab}$ will denote the category of abelian groups. On the category $\Com$ we have a classical functor of $p$-typical Witt vectors which assigns to every commutative ring $R$, a ring $W(R)$. 

\noindent While $W$ admits an explicit construction \cite{w} using Witt polynomials, there is also a  characterisation of $W$ as a functor satisfying the following universal property: $W$ is the right adjoint of the forgetful functor from the category of $\delta$-rings to the category $\Com$ (see \cite[Theorem 2]{J}).\\

\noindent In this paper we will only be interested in the group structure on $W(R)$, hence we will consider the functor $W$ as taking values in $\mathsf{Ab}$,
$$W : \Com \to \mathsf{Ab}$$

\noindent For a non-commutative ring $R$, the object $W(R)$ generally forms only an abelian group (see \cite{dkp}, \cite{h1}, \cite{h2}). Consequently, in this manuscript, although all rings under consideration are commutative, the ring structure on $W(R)$ is deliberately avoided. This approach facilitates a characterization of Witt vectors that can be generalized to the non-commutative setting. This work is part of an ongoing project and will be addressed in detail in a sequel to this manuscript.
 
\begin{point}\label{properties}
\noindent  For a commutative ring $R$ there exist functorial maps
\begin{enumerate}
\item[-] (Verschiebung) $V: W(R)\to W(R)$ (group homomorphism)
\item[-] (Teichm\"{u}ller map) $\langle - \rangle: R \to W(R)$ (set map).
\end{enumerate} 
which satisfy the following properties

\begin{enumerate} 
\item $\ab{0}=0$. If $p\neq 2$ then $\ab{-x} = -\ab{x}$ for all $x \in R$.
\item $x \mapsto V\ab{x^p} -p\ab{x}$ is an additive map from $R \to W(R)$. 
\item $W(R)$ is complete w.r.t to the filtration $\{V^nW(R)\ | \ n \in \N_0 \}$.
\item $A$ is $p$-torsion free $\implies W(A)$ is $p$-torsion free (see Lemma \eqref{property4}).
\end{enumerate}

\end{point}

\begin{remark}The above properties do not use the ring structure on $W(R)$. For e.g. 
the Teichm\"{u}ller map  $R \xrightarrow{\ab{\ }} W(R)$ is multiplicative. However, since we intend not to use the ring structure of $W(R)$, only a weaker consequence of multiplicativity is stated in the form of (1). 
\end{remark}
\noindent We now abstract the properties \ref{properties} in the following definition.

\begin{definition}[pre-Witt functor]
A functor $F:\Com \to \mathsf{Ab}$ is called a pre-Witt functor, if for every $R$, $F(R)$ is endowed with a functorial endomorphism $F(R)\xrightarrow{V}F(R)$ and a functorial map  $R\xrightarrow{\ab{\ }} F(R)$ such that the properties \ref{properties} are satisfied.
\end{definition}
\begin{remark} $R\mapsto W(R)$ is an example of a pre-Witt functor. 
\end{remark}

\begin{remark}
For every ring $R$, $W(R)$ admits one more functorial endomorphism, viz., the Frobenius endomorphism $W(R)\xrightarrow{\bf{F}}W(R)$. However, $\bf{F}$ is uniquely determined by the requirements ${\bf F}\ab{x}=\ab{x^p}$ and ${\bf F}V=p$.  
\end{remark}

\noindent The following theorem, which is the main result of this paper, shows that the properties as in  \ref{properties} characterise the functor $W$ on $\Com$ for $p\neq 2$.

\begin{theorem}\label{univcom} For $p\neq 2$, the classical functor of $p$-typical Witt vectors
$W: \Com \to Ab$ is a universal pre-Witt functor.

\end{theorem}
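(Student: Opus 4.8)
\smallskip
\noindent\emph{Plan of the proof.}
The statement asserts that $W$ is initial among pre-Witt functors: for every pre-Witt functor $F$ there should be a unique natural transformation $\eta^{F}\colon W\Rightarrow F$ of functors $\Com\to\mathsf{Ab}$ satisfying $\eta^{F}_{R}\circ\langle-\rangle_{W}=\langle-\rangle_{F}$ and $\eta^{F}_{R}\circ V=V\circ\eta^{F}_{R}$ for all $R$. For uniqueness I would use the standard convergent expansion
\[
 a=\sum_{n\ge 0}V^{n}\langle a_{n}\rangle ,\qquad a=(a_{0},a_{1},\dots)\in W(R),
\]
which is valid over every ring (one checks it on $p$-torsion-free rings via ghost components and then propagates it using the universality of the integral polynomial identities defining $W$). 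A natural transformation $\eta$ as above satisfies $\eta_{R}(V^{n}W(R))\subseteq V^{n}F(R)$, hence is continuous for the $V$-adic topologies; since $F(R)$ is complete by property (3), this forces $\eta_{R}(a)=\sum_{n\ge 0}V^{n}\langle a_{n}\rangle_{F}$.

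For existence I would take that formula as a definition, $\eta_{R}(a):=\sum_{n\ge 0}V^{n}\langle a_{n}\rangle_{F}$, the series converging by property (3) (taking $F=W$ recovers the identity, which is consistent with initiality). Compatibility with the Teichm\"uller maps is immediate since $\langle x\rangle_{W}=(x,0,0,\dots)$ and $\langle 0\rangle_{F}=0$ (property (1)); compatibility with $V$ follows from $\langle 0\rangle_{F}=0$ and the continuity of $V$ on the complete group $F(R)$; naturality in $R$ follows from naturality of $\langle-\rangle$ and of $V$ for $F$ together with the continuity of $F(f)$. Thus the entire content of the theorem is that each $\eta_{R}$ is a group homomorphism.

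To prove additivity I would first reduce, using the naturality just established, to a single universal instance: let $R_{0}=\mathbb Z[x_{0},x_{1},\dots;y_{0},y_{1},\dots]$ and $a=(x_{n})_{n}$, $b=(y_{n})_{n}$; every triple $(R,a,b)$ is the image of this one under $F(f)$ for the homomorphism $f\colon R_{0}\to R$, $x_{i}\mapsto a_{i}$, $y_{i}\mapsto b_{i}$, so it suffices to treat $(R_{0},a,b)$. Since $R_{0}$ is $p$-torsion-free, $F(R_{0})$ is $p$-torsion-free by property (4), and the goal becomes the identity
\[
 \sum_{n\ge 0}V^{n}\langle S_{n}\rangle_{F}\;=\;\sum_{n\ge 0}V^{n}\langle x_{n}\rangle_{F}+\sum_{n\ge 0}V^{n}\langle y_{n}\rangle_{F}
\]
in the $p$-torsion-free group $F(R_{0})$, where the $S_{n}=S_{n}(x_{0},\dots,x_{n};y_{0},\dots,y_{n})$ are the Witt addition polynomials.

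I would establish this by induction on the $V$-adic level: writing $E$ for the difference of the two sides, show $E\in V^{N}F(R_{0})$ for every $N$ and then conclude $E=0$ from completeness. For the inductive step, multiply by $p$ and invoke property (2) in the form $p\langle t\rangle_{F}=V\langle t^{p}\rangle_{F}-g_{F}(t)$ with $g_{F}\colon R_{0}\to F(R_{0})$ additive; using additivity of $g_{F}$ and the fact that $P_{n}:=S_{n}-x_{n}-y_{n}$ lies in $\mathbb Z[x_{0},\dots,x_{n-1},y_{0},\dots,y_{n-1}]$ with $P_{0}=0$, one trades the level-$N$ part of $pE$ for level-$(N+1)$ data together with a correction built from $g_{F}$ applied to the lower polynomials $P_{n}$; matching this correction against the recursion $\sum_{i\le n}p^{i}S_{i}^{p^{n-i}}=\sum_{i\le n}p^{i}x_{i}^{p^{n-i}}+\sum_{i\le n}p^{i}y_{i}^{p^{n-i}}$ for the Witt polynomials, and then cancelling the surviving factor of $p$ by $p$-torsion-freeness of $F(R_{0})$, closes the induction. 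I expect this last part --- faithfully reproducing the combinatorics of Witt addition inside $F$ using nothing but properties (1)--(4) --- to be the main obstacle. It is also exactly where the hypothesis $p\ne 2$ is used: property (1) gives $\langle -x\rangle_{F}=-\langle x\rangle_{F}$ only for odd $p$, and this is needed to control the signs in the correction terms (equivalently, for odd $p$ the negation of a Witt vector is componentwise, a fact that fails at $p=2$).
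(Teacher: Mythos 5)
Your overall architecture differs from the paper's: you try to build the natural transformation $W \Rightarrow F$ directly from the expansion $a=\sum_n V^n\langle a_n\rangle$, whereas the paper constructs an auxiliary functor $C$ by generators and relations, proves $C$ is universal among \emph{weak} pre-Witt functors, and then identifies $C$ with the Cuntz--Deninger functor $E\cong W$. Your uniqueness argument, the reduction of additivity to the universal ring $R_0=\Z[x_0,x_1,\dots;y_0,y_1,\dots]$, and the use of property (4) to get $p$-torsion-freeness of $F(R_0)$ are all sound, and they parallel steps the paper also takes. The problem is the additivity of $\eta_{R_0}$, which you yourself flag as ``the main obstacle'': as written, the induction does not close, and closing it requires an input that is absent from your sketch.

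Concretely, two things go wrong in the inductive step. First, after multiplying by $p$ and applying property (2), the level-$(n+1)$ terms you produce are $V^{n+1}\bigl(\langle S_n^p\rangle_F-\langle x_n^p\rangle_F-\langle y_n^p\rangle_F\bigr)$; since $S_n^p$ is not an addition polynomial in the variables $x_i^p,y_i^p$, and $\langle-\rangle_F$ is not assumed multiplicative or Frobenius-compatible, this is not an instance of the statement being inducted on, so ``trading level $N$ for level $N+1$'' does not reproduce the inductive hypothesis. Second, and more fundamentally, $p$-torsion-freeness of $F(R_0)$ only allows you to cancel $p$ from an exact equation $p\alpha=0$; it does not let you deduce $\alpha\in V^{N}F(R_0)$ from $p\alpha\in V^{N}F(R_0)$ (already in $W(\F_p)=\Z_p$ one has $VW=pW$, so $p\alpha\in VW$ holds for every $\alpha$ while $\alpha\in VW$ does not). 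Hence the factor of $p$ accumulated at each stage of the claimed induction ``$E\in V^NF(R_0)$ for all $N$'' cannot be removed, and the argument stalls after one step. What is actually needed is that every $\Z$-linear relation $\sum_i c_iV^{n_i}\langle a_i\rangle=0$ holding in $W(A)$ for $A$ a polynomial ring becomes, after multiplication by a power of $p$, a formal consequence of properties (1)--(2); this is the paper's Lemma \ref{genrelxa}, whose proof rests on the linear independence of $\langle f_1\rangle,\dots,\langle f_r\rangle$ for distinct polynomials with $f_i\neq -f_j$ (Lemma \ref{zindcomm}, proved by specialising the variables to integers and a Vandermonde-type determinant; this, together with the sign rule, is where $p\neq 2$ actually enters, not only through $\langle -x\rangle=-\langle x\rangle$ as you suggest). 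Your proposal contains no substitute for this linear-independence input, so the additivity of $\eta_{R_0}$ remains unproved.
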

\vspace{2mm}

\noindent The outline of the paper is as follows. In the second section we will recall the definition of the functor $E: \Com \to \mathsf{Ab}$ given by Cuntz and Deninger \cite{cd}. The functor $E$ is essentially an alternative construction of $W$ (see \cite[page 561]{cd}). In the third section, for $p\neq 2$, we give a construction of a functor $C: \Com \to \mathsf{Ab}$ which is obtained by enforcing the properties \ref{properties}. At this stage however, it is not straightforward to check that $C$ satisfies \ref{properties}(4). Hence we introduce a temporary notion of {\it weak  pre-Witt functor}, in which a weaker version of \ref{properties}(4) is used (see Definition \ref{defweakpre}). We will then prove that $C$ is a universal weak pre-Witt functor. The fourth section will be devoted to proving that $C$ and $E$  are isomorphic. This shows that $C$ is a universal pre-Witt functor. This is used to finish the proof of Theorem \ref{univcom}.

\section{The functor $E: \Com \to \mathsf{Ab}$}

\noindent We recall the following definitions and results by Cuntz and Deninger \cite{cd} for the convenience of the reader. Throughout this subsection we will assume that $R \in \Com$.

\begin{definition}\cite[Prop. 1.2]{cd}
\label{commxa} 
For $R \in \Com$ and $p$ a prime number, we have
\begin{enumerate}
\item (Verschiebung) $V: R^{\N_0}\to R^{\N_0}$ given by 
$ V(r_0,r_1,...) := p(0, r_0,r_1,...)$.
\item (Teichm\"{u}ller map) For $r\in R$ define 
$ \ab{r}:= (r,r^p,r^{p^2},...)$
\item $X(R)$ to be the closed subgroup of $R^{\N_0}$ generated by $\{V^n\ab{r} \ | \ n\in \N_0, \ r\in R\}$. Here the topology on $R^{\N_0}$ is the product topology. Note that $X(R)$ is invariant under $V$. 
\end{enumerate}

\noindent \label{commea} Let $A:=\Z [R]$ be the commutative polynomial ring on the set $R$. For $x \in R$ we denote the corresponding variable in $A$ by $[x]$. Let $$E(R):= \frac{X(A)}{X_I(A)}$$ where $I$ is the kernel of the natural surjection $A\to R$ and  $X_I(A) \subset X(A)$ is the closed subgroup generated by $$\{V^n\ab{a} - V^n\ab{b} \ | \ n \in \N_0,\ a, b \in A \text{ such that }a-b\in  I\}.$$
Thus we have a natural surjection $E(R):= \frac{X(A)}{X_I(A)} \to X(R)$.
\noindent $E(R)$ is naturally endowed with an endomorphism $V$ and a map $\ab{ \ } :R \to E(R)$ which sends $x$ to the class of $\ab{[x]}$ in $E(R)$. 
\end{definition}

\begin{remark}\label{rem:xi}
By Lemma \ref{everyxa} below, it is easy to see that the closed subgroup $X(I)$ generated by $\{V^n\ab{a} \ | \ n \in \N_0,  \ a\in  I\}$ as given in \cite[Prop.1.2]{cd} coincides with the closed subgroup $X_I(A)$ generated by 
$\{V^n\ab{a}-V^n\ab{b} \ |a,\ b \in A \text{ such that } a-b\in  I, \ n \in \N_0\}$. Thus the group $E(R)$ defined above for commutative rings is exactly the same as defined in \cite{cd}.
The subgroups $X(I)$ and $X_I(A)$ will not coincide for a non-commutative ring. For example, take a non-commutative polynomial ring $A:= \Z\{X, Y\}$ and $I:=\ab{X, Y} $ then $\ab{X} - \ab{Y}$ is in $X_I(A)$ but not $X(I)$. 
\end{remark}

\begin{lemma}\label{ilimxa} 
For $R \in \Com$, let $\tilde{X}(R)\subset X(R)$ be the subgroup generated by \\
$\ac{V^n\ab{r} \ | \ n\in \N_0, \ r\in R}$. Then 
\begin{enumerate}
\item The natural map $X(R) \to \ilim_n \frac{X(R)}{V^n(X(R))}$ is an isomorphism. 
\item The natural map $\frac{\tilde{X}(R)}{V^n(\tilde{X}(R))} \to \frac{X(R)}{V^n(X(R))}$ is an isomorphism.
\end{enumerate}
In particular we have a canonical isomorphism 
$X(R) \cong \ilim_n\frac{\tilde{X}(R)}{V^n(\tilde{X}(R))}$.
\end{lemma}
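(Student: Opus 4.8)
The plan is to prove Lemma~\ref{ilimxa} by reducing both assertions to elementary statements about the topology and group structure on $R^{\N_0}$. Recall that $X(R)$ is by definition the \emph{closed} subgroup of $R^{\N_0}$ (with the product topology) generated by the elements $V^n\ab{r}$, while $\tilde{X}(R)$ is the (not necessarily closed) \emph{algebraic} subgroup generated by the same set; thus $X(R)$ is the closure of $\tilde{X}(R)$ in $R^{\N_0}$. The key structural observation is that $V$ acts as a ``shift'' that raises the valuation: an element of $R^{\N_0}$ lies in the closure of $V^n(\text{anything})$ essentially when its first $n$ coordinates vanish. More precisely, I would first record the easy fact that $V^n(R^{\N_0}) = p^n\big(\underbrace{0,\dots,0}_{n},\,R^{\N_0}\big)$, so that $V^n(X(R)) = X(R)\cap V^n(R^{\N_0})$ consists exactly of those elements of $X(R)$ whose first $n$ coordinates are (suitably divisible by $p$ and) zero, and that the subsets $V^n(R^{\N_0})$ form a neighbourhood basis of $0$ compatible with the product topology. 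In particular $\bigcap_n V^n(X(R)) = 0$ and $X(R)$ is separated and complete for this filtration.

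For part~(1), the plan is as follows. Separatedness ($\bigcap_n V^nX(R)=0$) gives injectivity of the natural map $X(R)\to\varprojlim_n X(R)/V^nX(R)$. For surjectivity I would take a compatible system $(x_n + V^nX(R))_n$ and lift it to an actual sequence: since $x_{n+1}-x_n\in V^nX(R)\subset V^n(R^{\N_0})$, the partial lifts agree in more and more coordinates, hence form a Cauchy sequence in $R^{\N_0}$; its limit $x$ exists because $R^{\N_0}$ is complete in the product topology and $X(R)$ is closed, so $x\in X(R)$, and by construction $x\equiv x_n \pmod{V^nX(R)}$ for every $n$. This is the standard ``complete filtered group is its own inverse limit'' argument; the only thing to be careful about is that the filtration subgroups $V^nX(R)$ really are closed and really do induce the subspace-of-product topology, which follows from the shift description above.

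For part~(2), I must show $\tilde{X}(R)/V^n\tilde{X}(R)\to X(R)/V^nX(R)$ is an isomorphism. Injectivity amounts to $\tilde{X}(R)\cap V^nX(R) = V^n\tilde{X}(R)$: given $y\in\tilde{X}(R)$ lying in $V^nX(R)$, its first $n$ coordinates vanish (modulo the appropriate power of $p$), and since $y$ is already a \emph{finite} $\Z$-combination of elements $V^m\ab{r}$, one checks directly that the terms with $m<n$ must cancel out in those first $n$ coordinates, which (using $p\neq 2$ is \emph{not} needed here, only the explicit form of $\ab{r}=(r,r^p,\dots)$ and the shift $V$) forces $y$ to actually be $V^n$ of a finite combination, i.e. $y\in V^n\tilde{X}(R)$. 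Surjectivity is immediate because $\tilde{X}(R)$ is dense in $X(R)$ and the quotient $X(R)/V^nX(R)$ is discrete (as $V^nX(R)$ is open), so the image of the dense subgroup is everything. The ``in particular'' statement then follows by combining (1) and (2): substitute the isomorphisms of (2) into the inverse limit of (1). I expect the main obstacle to be the bookkeeping in the injectivity part of (2) — carefully extracting, from a finite $\Z$-linear combination of $V^m\ab{r}$'s whose leading coordinates vanish, the conclusion that it is divisible by $V^n$ \emph{within} $\tilde{X}(R)$ rather than merely within $X(R)$; everything else is soft point-set topology about complete filtered groups.
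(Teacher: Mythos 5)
There is a genuine gap, and it sits exactly where you defer to ``soft point-set topology''. Both parts of the lemma hinge on one nontrivial identity, which is in fact the entire content of the paper's proof of (1): $(\{0\}^n\times R\times R\times\cdots)\cap X(R)=V^n(X(R))$. Only the inclusion $\supseteq$ follows from the ``shift description'' of $V$; the inclusion $\subseteq$ asserts that an element of $X(R)$ whose first $n$ coordinates vanish is $V^n$ of an element of $X(R)$, and this requires knowing what elements of $X(R)$ look like. (Note also that $V^n(R^{\N_0})=p^n\bigl(\{0\}^n\times R^{\N_0}\bigr)$ is \emph{not} a neighbourhood of $0$ in the product topology, so your claimed neighbourhood basis is wrong; and $V^n(X(R))=X(R)\cap V^n(R^{\N_0})$ is itself not obvious, since $V^n$ need not lift within $X(R)$.) Without the displayed identity you only obtain that $X(R)$, being closed in $R^{\N_0}$, is complete for the \emph{subspace} topology, not for the $V$-filtration: your limit $x$ of the Cauchy sequence of lifts satisfies $x-x_n\in\overline{V^nX(R)}$, and you still need $V^nX(R)$ to be closed and open in $X(R)$. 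The paper supplies the missing input as Lemma \ref{everyxa} (every element of $X(R)$ is a convergent series $\sum_m V^m\ab{a_m}$), which rests on the Cuntz--Deninger identification $X(A)=E(A)\cong W(A)$ for polynomial rings --- genuine Witt-vector arithmetic, not the topology of the ambient product.

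The injectivity step you propose for (2) is moreover demonstrably false as described. Take $R=\Z$, $n=1$, and $y=\ab{2}-2\ab{1}=(0,\,2^p-2,\,2^{p^2}-2,\dots)\in\tilde{X}(\Z)$. One has $y=V(x)$ with $x=\bigl(\tfrac{2^p-2}{p},\tfrac{2^{p^2}-2}{p},\dots\bigr)\in X(\Z)$ (the Dwork-type congruences $2^{p^{k+1}}\equiv 2^{p^k}\ (\mathrm{mod}\ p^{k+1})$ show $x$ is a ghost vector of a Witt vector over $\Z$), so $y\in\tilde{X}(\Z)\cap V(X(\Z))$; yet the ``terms with $m<1$'', namely $\ab{2}$ and $-2\ab{1}$, do not cancel. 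Worse, a leading-term estimate on the coordinates shows that any finite $\Z$-combination $\sum_i c_iV^{m_i}\ab{r_i}$ with all $m_i\ge 1$ has its coefficient of $2^{p^k}$ divisible by $p$, whereas $y_k=2^{p^k}-2$; hence $y\notin V(\tilde{X}(\Z))$, and in particular $\tilde{X}(\Z)\cap V(X(\Z))\neq V(\tilde{X}(\Z))$. So the bookkeeping you flag as ``the main obstacle'' is not mere bookkeeping: coordinatewise cancellation cannot yield the injectivity in (2), and this example even puts pressure on the literal injectivity claim, which must instead be extracted from Lemma \ref{everyxa} (as the paper does for the surjectivity and for the ``in particular'' isomorphism that is actually used later). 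Your surjectivity argument for (2) via density, and the injectivity in (1) via separatedness, are fine once the key identity is established.
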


\begin{proof} To prove (1), it suffices to observe that, for any $n \in \N_0$,
$$ (\{0\}^n \times R \times R \times \cdot \cdot \cdot ) \cap X(R) = V^n(X(R)).$$ 
The surjectivity and statement (2) are deduced from Lemma \eqref{everyxa}. 
\end{proof}

\begin{lemma}\label{xisprewitt} $X : \Com \to \mathsf{Ab}$ is a pre-Witt functor.
\end{lemma}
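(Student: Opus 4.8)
The plan is to check, essentially by unwinding the definitions, that the functor $X$ together with the Verschiebung $V$ and the Teichm\"{u}ller map $\ab{\ }$ of Definition \ref{commxa} satisfies all four properties listed in \ref{properties}. First I would record functoriality: a ring homomorphism $f\colon R\to S$ induces the continuous group homomorphism $f^{\N_0}\colon R^{\N_0}\to S^{\N_0}$, which commutes with $V$ and carries $\ab{r}$ to $\ab{f(r)}$; consequently it sends the generating set $\{V^n\ab{r}:n\in\N_0,\,r\in R\}$ of the closed subgroup $X(R)$ into $X(S)$, and by continuity it restricts to a homomorphism $X(R)\to X(S)$. This makes $X\colon\Com\to\mathsf{Ab}$ a functor for which both $V$ and $\ab{\ }$ are functorial.

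For property \ref{properties}(1) I would simply note that $\ab{0}=(0,0,\dots)=0$ and that, when $p\neq 2$, every exponent $p^{k}$ is odd, so $\ab{-x}=(-x,(-x)^{p},(-x)^{p^{2}},\dots)=(-x,-x^{p},-x^{p^{2}},\dots)=-\ab{x}$. For property \ref{properties}(2), the element $V\ab{x^{p}}-p\ab{x}$ lies in $X(R)$ (indeed $\ab{x}\in X(R)$ and, as $X(R)$ is $V$-stable, $V\ab{x^{p}}\in X(R)$), and a one-line computation in $R^{\N_0}$ gives
\[
V\ab{x^{p}}-p\ab{x}=(0,px^{p},px^{p^{2}},\dots)-(px,px^{p},px^{p^{2}},\dots)=(-px,0,0,\dots),
\]
so $x\mapsto V\ab{x^{p}}-p\ab{x}$ is the map $x\mapsto(-px,0,0,\dots)$, which is visibly additive.

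Property \ref{properties}(3) is already in hand: it is exactly Lemma \ref{ilimxa}(1), asserting that the canonical map $X(R)\to\ilim_{n}X(R)/V^{n}X(R)$ is an isomorphism, i.e.\ that $X(R)$ is complete for the filtration $\{V^{n}X(R)\}_{n\in\N_0}$. Finally, for property \ref{properties}(4), if $A$ is $p$-torsion free then so is the product $A^{\N_0}$ (a tuple killed by $p$ has every coordinate killed by $p$), and $X(A)$ is a subgroup of $A^{\N_0}$, hence $p$-torsion free. I do not anticipate any genuine obstacle here: apart from completeness---which is Lemma \ref{ilimxa}(1), proved above---each verification is immediate from the definitions of $V$ and $\ab{\ }$.
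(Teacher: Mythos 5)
Your proposal is correct and follows essentially the same route as the paper, which simply asserts that properties (1)--(3) are clear from the construction and Lemma \ref{ilimxa} and that (4) follows from $X(R)$ being a subgroup of $R^{\N_0}$; you merely spell out the coordinatewise computations (correctly accounting for the factor $p$ in $V(r_0,r_1,\dots)=p(0,r_0,r_1,\dots)$, so that $V\ab{x^p}-p\ab{x}=(-px,0,0,\dots)$). No gaps.
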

\begin{proof} It is clear from the construction \ref{commea} and Lemma \ref{ilimxa} that,  $X(R)$ satisfies (1)-(3) of \ref{properties} for $R \in \Com$. It satisfies (4), follows from the fact that $X(R)$ is a subgroup of $R^{\N_0}$. Thus $X$ is a pre-Witt functor.
\end{proof}

\noindent We now recall an important result proved by Cuntz and Deninger, which shows that the property (4) of \ref{properties} is satisfied by $E(R)$.
\begin{lemma}\cite[Cor. 2.10]{cd}\label{property4}
$R$ is $p$-torsion free $\implies E(R) = X(R)$. In particular, $E(R)$ is $p$-torsion free. 
\end{lemma}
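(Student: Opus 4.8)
The plan is to prove the sharper statement that, for $p$-torsion free $R$, the natural surjection $E(R)=X(A)/X_I(A)\to X(R)$ is an \emph{iso}morphism; the ``in particular'' then follows immediately, since $X(R)$ is a subgroup of $R^{\N_0}$ and $R^{\N_0}$ is $p$-torsion free whenever $R$ is. Because the surjection $E(R)\to X(R)$ is induced by the coordinatewise map $X(A)\to X(R)$ coming from $A\to R$, whose kernel is $X(A)\cap I^{\N_0}$, everything comes down to proving $X(A)\cap I^{\N_0}\subseteq X_I(A)$; the opposite inclusion is immediate from the definition of $X_I(A)$.

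First I would record the one-step statement: if $w\in X(A)\cap I^{\N_0}$ then $w\in X_I(A)+VX(A)$. Indeed the $0$-th coordinate $w_0$ lies in $I$, so $\ab{w_0}=\ab{w_0}-\ab{0}$ lies in $X_I(A)$ (using $\ab{0}=0$ and $w_0-0\in I$), while $w-\ab{w_0}$ has vanishing $0$-th coordinate and hence lies in $VX(A)$ by the observation used in the proof of Lemma \ref{ilimxa}. Next I would upgrade this to $X(A)\cap I^{\N_0}\subseteq X_I(A)+V^nX(A)$ for all $n$ by iterating. Write $w=z+Vu$ with $z\in X_I(A)$, $u\in X(A)$; applying $X(A)\to X(R)$ gives $V\bar u=\bar w-\bar z=0$ in $R^{\N_0}$, and since $V$ is injective on $R^{\N_0}$ --- this is exactly where the $p$-torsion freeness of $R$ is used --- we get $\bar u=0$, so $u\in X(A)\cap I^{\N_0}$ again. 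Repeating, and using that $X_I(A)$ is stable under $V$, one peels off successive powers of $V$: after $n$ steps $w$ differs from an element of $X_I(A)$ by an element of $V^nX(A)$. Finally, since $X_I(A)$ is closed and $X(A)$ is complete for the filtration $\{V^nX(A)\}$ (Lemma \ref{xisprewitt}), letting $n\to\infty$ forces $w\in X_I(A)$. Hence $X(A)\cap I^{\N_0}=X_I(A)$ and $E(R)\cong X(R)$, which recovers \cite[Cor. 2.10]{cd}.

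The main obstacle is that elements of $X(A)$ have no canonical ``normal form'' $\sum_i V^i\ab{a_i}$ with prescribed Teichm\"uller representatives, so from $w\in I^{\N_0}$ one cannot directly conclude that the $a_i$'s appearing in $w$ lie in $I$; the iteration above is designed precisely to avoid this, at each stage needing only the $0$-th coordinate to lie in $I$ and clearing it with a single Teichm\"uller term from $X_I(A)$. Beyond this bookkeeping the only substantive inputs are the injectivity of $V$ on $R^{\N_0}$ (the point at which the hypothesis on $R$ is consumed) and the completeness of $X(A)$, needed to pass from the approximate identities modulo $V^n$ to the exact one.
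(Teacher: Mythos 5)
The paper offers no proof of this lemma at all: it is imported verbatim from \cite[Cor.~2.10]{cd}, so your argument is necessarily a different route --- a direct, self-contained replacement for that citation --- and it is essentially correct. The d\'evissage is the right one: reduce to $X(A)\cap I^{\N_0}\subseteq X_I(A)$, clear the $0$-th coordinate with the single generator $\ab{w_0}-\ab{0}\in X_I(A)$, and use injectivity of $V$ on $R^{\N_0}$ (exactly where $p$-torsion freeness of $R$ enters, since $V=p\cdot\text{shift}$) to see that the leftover $Vu$ has $u$ again in $X(A)\cap I^{\N_0}$; iterating and using stability of $X_I(A)$ under $V$ gives $w\in X_I(A)+V^nX(A)$ for all $n$. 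Two caveats are worth recording. First, your Step 1 consumes the nontrivial inclusion $X(A)\cap(\{0\}\times A^{\N})\subseteq VX(A)$; the paper also asserts this without proof (inside the proof of Lemma \ref{ilimxa}), and it is itself a lemma of \cite{cd}, so your argument is self-contained only modulo the same black box the paper already relies on. Second, your appeal to completeness of $X(A)$ via Lemma \ref{xisprewitt} would be circular in this paper's logical ordering: completeness is the surjectivity half of Lemma \ref{ilimxa}(1), which the paper deduces from Lemma \ref{everyxa}, which is deduced from the very statement you are proving. Fortunately completeness is not needed: since $V^nX(A)\subseteq\{0\}^n\times A^{\N}$ trivially, the containments $w\in X_I(A)+V^nX(A)$ already place $w$ in the closure of $X_I(A)$ for the product topology, and $X_I(A)$ is closed by definition. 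With that substitution the proof stands; what the citation buys the paper is brevity, while your argument makes visible precisely where the hypothesis on $R$ is used.
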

The following lemma, is an easy consequence of the above result. 
\begin{lemma}\label{everyxa}
Every element of $X(R)$ can be written as $\sum_{n=0}^\infty V^n\ab{a_n}$. 
\end{lemma}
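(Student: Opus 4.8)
The plan is to reduce to the torsion-free case via a functorial surjection, using Lemma \ref{property4}. First I would observe that it suffices to prove the statement for $A = \Z[R]$, the polynomial ring on the underlying set of $R$: indeed, by Lemma \ref{xisprewitt} (or directly from the definitions) the natural surjection $A \surj R$ induces a surjection $X(A) \surj X(R)$ commuting with $V$ and $\ab{\ }$, so every element of $X(R)$ is the image of an element of $X(A)$, and a representation of the latter as $\sum_n V^n\ab{a_n}$ maps to a representation of the former as $\sum_n V^n\ab{\bar a_n}$. Since $A$ is a polynomial ring over $\Z$ it is $p$-torsion free, so by Lemma \ref{property4} we have $E(A) = X(A)$. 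Thus it is enough to show that every element of $E(A)$ admits such an expression.

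Next I would unwind the definition of $E(A)$. By construction $E(A) = X(B)/X_I(B)$ where $B = \Z[A]$ is the polynomial ring on the set $A$ and $I = \ker(B \to A)$; concretely, $X(B)$ is the closed subgroup of $B^{\N_0}$ generated by the $V^n\ab{b}$ for $b \in B$, $n \in \N_0$. A general element of $X(B)$ is therefore an infinite sum $\sum_{i} V^{n_i}\ab{b_i} - \sum_j V^{m_j}\ab{c_j}$ converging in the product topology; the subtracted Teichm\"uller terms are the obstruction to the desired form. The key point is to absorb a difference $\ab{b} - \ab{c}$ (or more generally $V^n\ab{b} - V^n\ab{c}$) into the quotient. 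I would use that for $b, c \in B$ with, say, $b - c \in I$ the element $V^n\ab{b} - V^n\ab{c}$ is by definition already zero in $E(A)$; more to the point, for arbitrary $b, c$ one can write $-\ab{c} = \ab{-c}$ (here $p \neq 2$, property (1)) and then reorganise, but the honest mechanism is: in $X(B)$ itself, using $p$-torsion freeness of $B$ and the telescoping identity already available from Lemma \ref{everyxa}'s analogue \emph{within} $B$ — wait, that is circular. So instead I would run the reduction the other way.

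The cleaner route, which I would actually carry out: prove the statement first for $p$-torsion free $R$ directly, then deduce the general case. For $R$ $p$-torsion free, Lemma \ref{property4} gives $E(R) = X(R)$, so $X(R)$ is the image of $X(\Z[R])$; but I want an intrinsic argument. Here is the induction on the filtration. By Lemma \ref{ilimxa}, $X(R) \cong \ilim_n \tilde X(R)/V^n\tilde X(R)$, and $\tilde X(R)$ is generated (as an ordinary group, no closure) by the $V^n\ab{r}$. Modulo $V^N X(R)$, any element of $X(R)$ is a \emph{finite} $\Z$-linear combination $\sum_{n < N} k_n V^n\ab{r_n}$ of such generators; I claim that modulo $V^N X(R)$ this can be rewritten with all coefficients $+1$, i.e. as $\sum_{n<N} V^n\ab{a_n}$. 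The inductive step: working modulo $V X(R)$ first, the image lies in $X(R)/VX(R) \cong R$ (via the $0$th coordinate, using property (1) and $\ab{\ }$ multiplicative surjectivity onto $R$), so the $n=0$ part is $\ab{a_0}$ for a unique $a_0$; subtract $\ab{a_0}$, land in $VX(R) = V X(R)$, divide by $V$ (injective on $X(R)$ since $X(R) \subset R^{\N_0}$ and $p$-torsion free, so $V$ is injective), and repeat. Assembling the compatible choices $a_n$ and passing to the limit gives $\sum_{n=0}^\infty V^n\ab{a_n} \in X(R)$ equal to the original element. Finally, for general $R$, apply this to $A = \Z[R]$ (which is $p$-torsion free), write an arbitrary element of $X(A)$ as $\sum_n V^n\ab{a_n}$, and push forward along $X(A) \surj X(R)$.

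The main obstacle is the rewriting step: showing that a finite $\Z$-combination $\sum k_n V^n\ab{r_n}$ is, modulo the filtration, of the special form $\sum V^n\ab{a_n}$. This is exactly where property (2) — additivity of $x \mapsto V\ab{x^p} - p\ab{x}$ — together with $p$-torsion freeness does the work: it lets one trade $p\ab{x}$ for $V\ab{x^p}$ up to a genuine Teichm\"uller term $V\ab{(\text{sum})}$, so negative coefficients and coefficients $>1$ can be normalised one filtration-level at a time. I expect this normalisation lemma to be the technical heart; the rest (the inverse-limit assembly and the pushforward to general $R$) is formal given Lemmas \ref{ilimxa}, \ref{property4}, and the injectivity of $V$ on subgroups of $R^{\N_0}$.
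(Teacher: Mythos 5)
Your opening reduction (pass to $A=\Z[R]$ and push forward along $X(A)\surj X(R)$) is the paper's first step, but from there you part ways with the paper, and the route you settle on has a genuine gap. The paper simply quotes \cite[Cor.~2.10]{cd} and \cite[p.~561]{cd} to identify $X(A)=E(A)\cong W(A)$ for a polynomial ring $A$ and then reads off the expansion from the standard bijection $A^{\N_0}\to W(A)$, $(a_n)\mapsto\sum_n V^n\ab{a_n}$. You instead try to manufacture the expansion intrinsically by inducting on the $V$-filtration, and the linchpin of that induction is the claim that the $0$th-coordinate map identifies $X(R)/VX(R)$ with $R$ --- equivalently, that every $\xi\in X(R)$ with $\xi_0=0$ lies in $VX(R)$. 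You assert this in a parenthesis but never prove it, and it is not a formality: in the Cuntz--Deninger model the coordinates of $R^{\N_0}$ are ghost components, so already for $\xi=\ab{x}+\ab{y}-\ab{x+y}$ one must show that $(0,\ x^p+y^p-(x+y)^p,\ \dots)$ is $p$ times a shift of an element \emph{of $X(R)$}, which is precisely the integrality of the Witt addition polynomials (a Dwork-type congruence statement) --- essentially the content of the lemma being proved. Your proposed fallback via property (2) hits the same wall: property (2) gives $p(\ab{x}+\ab{y}-\ab{x+y})=V(\ab{x^p}+\ab{y^p}-\ab{(x+y)^p})$, and since $V$ is $p$ times the shift, cancelling $p$ leaves the bare shift of an element of $\tilde X(R)$, which lies in $VX(R)$ only if you already know that $\ab{x^p}+\ab{y^p}-\ab{(x+y)^p}$ is divisible by $p$ inside $X(R)$ --- an infinite regress.

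There is also a circularity hazard: you invoke Lemma \ref{ilimxa} (to say that, modulo $V^NX(R)$, elements of $X(R)$ are finite $\Z$-combinations of the $V^n\ab{r}$), but the paper deduces the surjectivity in Lemma \ref{ilimxa} and its part (2) \emph{from} Lemma \ref{everyxa}, so those statements are not available here. The repair that matches the paper is to stop after your first paragraph: for $A=\Z[R]$ use Lemma \ref{property4} to get $X(A)=E(A)$, cite $E(A)\cong W(A)$ from \cite[p.~561]{cd}, and obtain the expansion from the known bijection $A^{\N_0}\to W(A)$; the filtration induction can then be discarded.
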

\begin{proof}
Since $X(R)$ is the quotient of $X(\Z[R])$, it suffices to prove the statement when $R$ is a polynomial ring. When $R$ is a polynomial ring, $X(R) = E(R)$ \cite[Cor. 2.10]{cd} and $E(R)\cong W(R)$ \cite[page 561]{cd}. The result then follows from the corresponding well known bijection $R^{\N_0}\to W(R)$ which is indeed given by the formula $$(a_0, a_1,...) \mapsto \sum_{n} V^n\ab{a_n}$$ 
\end{proof}

\begin{lemma}\label{ertopgen}
The group $E(R)$ is topologically generated by $\{V^n\ab{x}|\ x\in R \ ,n\geq 0\}$. Moreover, the functor $E: \Com \to Ab$ satisfies the properties in \ref{properties} and hence is a pre-Witt functor.
\end{lemma}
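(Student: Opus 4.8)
The plan is to read off every assertion from the presentation $E(R)=X(A)/X_I(A)$ with $A=\Z[R]$, using Lemmas \ref{ilimxa}, \ref{property4} and \ref{everyxa}. For the topological generation, the key observation is that for any $a\in A$, if $\bar a\in R$ denotes its image under $A\to R$, then $a-[\bar a]\in I$; hence $V^n\ab{a}-V^n\ab{[\bar a]}\in X_I(A)$ for all $n$, so the class of $V^n\ab{a}$ in $E(R)$ equals $V^n\ab{\bar a}$. Since by Lemma \ref{everyxa} every element of $X(A)$ is a convergent sum $\sum_{n\ge 0}V^n\ab{a_n}$ with $a_n\in A$, and the quotient map $X(A)\to E(R)$ is continuous for the $V$-adic filtrations, every element of $E(R)$ is a convergent sum $\sum_{n\ge 0}V^n\ab{x_n}$ with $x_n\in R$; in particular $\{V^n\ab{x}\mid x\in R,\ n\ge 0\}$ topologically generates $E(R)$.

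Properties (1) and (2) are then verified by explicit computation. For (1): $[0]$ and $0_A$ have the same image in $R$, so $\ab{[0]}-\ab{0_A}\in X_I(A)$, while $\ab{0_A}=0$ in $A^{\N_0}$ from $\ab{b}=(b,b^p,b^{p^2},\dots)$, whence $\ab{0}=0$ in $E(R)$; when $p$ is odd, $[-x]$ and $-[x]$ have the same image in $R$, so $\ab{[-x]}-\ab{-[x]}\in X_I(A)$, while $\ab{-[x]}=-\ab{[x]}$ in $A^{\N_0}$ because $(-b)^{p^k}=-b^{p^k}$, hence $\ab{-x}=-\ab{x}$. For (2): by functoriality of $E$ (compatible with $V$ and $\ab{\ }$) it suffices to prove additivity of $z\mapsto V\ab{z^p}-p\ab{z}$ for the two variables of the polynomial ring $R=\Z[s,t]$; this ring is $p$-torsion free, so by Lemma \ref{property4} one may compute inside $X(R)\subseteq R^{\N_0}$, where $V(r_0,r_1,\dots)=p(0,r_0,r_1,\dots)$ and $\ab{z}=(z,z^p,z^{p^2},\dots)$ yield $V\ab{z^p}-p\ab{z}=(-pz,0,0,\dots)$, and additivity is immediate.

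For (3), note $V^nE(R)=(V^nX(A)+X_I(A))/X_I(A)$, so $E(R)/V^nE(R)\cong X(A)/(V^nX(A)+X_I(A))$, and the claim is that $E(R)\to\varprojlim_n E(R)/V^nE(R)$ is an isomorphism. Injectivity amounts to $X_I(A)=\bigcap_n(V^nX(A)+X_I(A))$, which holds since $X_I(A)$ is closed in $X(A)$ and, by Lemma \ref{ilimxa}(1), $X(A)$ is $V$-adically complete, so this intersection is the closure of $X_I(A)$. For surjectivity, lift a compatible system to elements $\xi_n\in X(A)$ and successively replace $\xi_n$ by $\xi_n-\eta_n$ with $\eta_n\in X_I(A)$ so that $\xi_{n+1}-\xi_n\in V^nX(A)$; the resulting Cauchy sequence converges in $X(A)$ to an element mapping to the given system. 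Finally, (4) is precisely Lemma \ref{property4}.

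I expect the main obstacle to be (3): one must check that the $V$-adic filtration on the quotient $X(A)/X_I(A)$ is compatible with passage to the inverse limit, and this rests entirely on $X_I(A)$ being a \emph{closed} subgroup of $X(A)$. Alternatively, one could shortcut (1)--(3) by invoking the natural isomorphism $E(R)\cong W(R)$ of \cite[p.~561]{cd}, compatible with $V$ and $\ab{\ }$, under which these become classical properties of $W$; but the direct argument above stays within the framework of this section and does not use the ring structure of $W(R)$.
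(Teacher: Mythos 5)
Your proof is correct and follows essentially the same route as the paper: topological generation from the presentation $E(R)=X(A)/X_I(A)$, property (3) from completeness of $X(A)$ (Lemma \ref{ilimxa}) together with closedness of $X_I(A)$, and property (4) from Lemma \ref{property4}. You simply supply details the paper leaves implicit --- the identification of the class of $V^n\ab{a}$ with $V^n\ab{\bar a}$, the explicit checks of (1) and (2) via reduction to $\Z[s,t]$, and a hands-on proof of the quotient-completeness step where the paper cites Bourbaki --- all of which are sound.
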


\begin{proof}
Since, $X(\Z[R])$ is topologically generated by $\{V^n\ab{a} \ | a\in \Z[R], n\geq 0\}$ and $E(R)$ is quotient of $X(\Z[R])$, $E(R)$ is topologically generated by images of $\{V^n\ab{a} \ | \ a \in \Z[R] \ ,n\geq 0\}$ under the map $X(\Z[R]) \to  E(R)$. This proves the first part.
    
\noindent For the second part, by Lemma \ref{ilimxa}, $X(\Z[R])$ is an abelian group which is complete and Hausdorff w.r.t the $V$-filtration and  $X_I(\Z[R])$ being closed subgroup, $E(R)$ is $V$-complete by \cite[chap IX, §3.1, prop. 4]{bour}. This proves the $(3)$ in properties \ref{properties}. The fourth property follows from Lemma \ref{property4}. Other properties in \ref{properties} are easy to deduce.
\end{proof}
\vspace{2mm}

\section{The functor $C: \Com \to \mathsf{Ab}$}

\noindent Fix a prime $p \neq 2$ throughout this section. In this section, we first define a functor $C : \Com \to \mathsf{Ab}$ which attempts to enforce the properties \ref{properties}. It is less straightforward to see that $C$ satisfies \ref{properties}(4) and hence we introduce a temporary notion of a weak pre-Witt functor (see Definition \ref{defweakpre}). We will in fact show in the next section that $C$ is a pre-Witt functor (see Theorem \ref{cise}). In this section, however, we limit ourselves to showing that $C$ is a universal weak pre-Witt functor (see Theorem \ref{universalweakprewitt}).
\begin{definition}[The functor $C$]\label{functorc}
For any associative ring $R$ and a prime $p\neq 2$, we first define a group $G(R)$ as follows. Let 
\begin{enumerate}
\item $\tilde{G}(R)$ be the free abelian group generated by symbols $\{ V^n_r \ | \ n\in \N_0 , \ 0\neq r\in R\}$. Define $V^n_0:=0$ for all $n\geq0$. 
\item A set map $R \xrightarrow{\ab{\ } } \tilde{G}(R)$ given by $\ab{r} := V^0_r$.
\item A homomorphism $\tilde{G}(R) \xrightarrow{V} \tilde{G}(R)$ defined by $V(V^n_r):= V^{n+1}_r$.
\item $H(R)\subset \tilde{G}(R)$ be the subgroup generated by the set: 
$$  \Union_n\left(\{(V^n_{(x+y)^p} - p V^{n-1}_{x+y}) - (V^n_{x^p} - p V^{n-1}_{x}) - (V^n_{y^p} - p V^{n-1}_{y})\ | \ x, y \in R \} \Union \{V^n_r+V^n_{-r} \ | \ r \in R\}\right)$$ 
\item $\tilde{H}(R)$ be the $p$-saturation of $H(R)$, i.e.
$ \tilde{H}(R):= \{ \alpha \in \tilde{G}(R) \ | \ p^\ell \alpha \in H(R) \ \text{for some } \ell >0\}$.
\item Let $G^0(R)$ denote the completion of $\tilde{G}(R)/\tilde{H}(R)$ by the $V$-filtration. Let $G(R)$ be the quotient of $G^0(R)$ modulo the closed subgroup generated by $p$-power torsion elements. Note that the construction of $G(R)$ is functorial in $R$.\\
\end{enumerate}
Let $R\in \Com$ and 
$$ 0 \to I \to \Z[R] \to R\to 0$$
be the canonical free presentation of $R$.
Define $$C(R):=\frac{G(\Z[R])}{G_I(\Z[R])}$$ where $G_I(\Z[R])$ is the closed subgroup generated by 
$$\{V_x^n - V_y^n \ | \ x, y \in \Z[R] \text{ such that } x-y \in I, \ n \geq 0 \}.$$
We have a well defined group homomorphism $V: C(R) \to C(R)$ and a well defined map of sets $R \xrightarrow{\ab{ \ }} C(R)$ induced by the corresponding maps on $G(\Z[R])$.\\

\noindent Any ring homomorphism $f: R \to S,$ gives a group homomorphism $G(R) \to G(S)$ induced by $V^n_r \mapsto V^n_{f(r)}$ which is compatible with $V$ and hence a continuous group homomorphism $C(f): C(R) \to C(S).$ 
\end{definition}

\begin{remark}
There is a straight forward modification of the definition of $C$ for $p=2$. However we only restrict ourselves to the condition $p\neq 2$ due to it's requirement in Lemma \ref{zindcomm}. We have not yet explored the case $p=2$.
\end{remark}

\noindent The next result is about $C(A)$ when $A$ is a polynomial algebra. 
\begin{lemma}\label{ceqg}
If $A=\Z[S]$ is a free commutative algebra, then $C(A) \cong G(A)$. In particular, $C(A)$ is $p$-torsion free.
\end{lemma}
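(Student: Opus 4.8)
The plan is to show that when $A = \Z[S]$ is free, the "defining relations" of $C(A)$ — that is, the subgroup $G_I(\Z[A])$ inside $G(\Z[A])$ — are already forced by the relations built into $G(A)$ itself, so that the natural surjection $G(\Z[A]) \to G(A)$ descends to an isomorphism $C(A) \xrightarrow{\sim} G(A)$. Concretely, recall that $C(A) = G(\Z[A])/G_I(\Z[A])$ where $I = \ker(\Z[A] \to A)$, and there is an obvious candidate inverse: the ring map $\Z[A] \to A$ (identity on $A \subset \Z[A]$, which exists precisely because $A$ is free) induces a continuous homomorphism $G(\Z[A]) \to G(A)$ sending $V^n_{[x]} \mapsto V^n_{x}$. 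First I would check that this kills $G_I(\Z[A])$: if $x - y \in I$ then $x$ and $y$ map to the same element of $A$, so $V^n_{[x]}$ and $V^n_{[y]}$ have the same image, hence $V^n_{[x]} - V^n_{[y]} \mapsto 0$; since the target is $V$-complete the closed subgroup generated by these elements also dies. This gives a continuous map $\varphi: C(A) \to G(A)$.

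Next I would construct the map the other way. The section $A \hookrightarrow \Z[A]$, $x \mapsto [x]$, is not a ring homomorphism, so it does not directly induce a map on $G$; instead I would define $\psi: G(A) \to C(A)$ at the level of generators by $V^n_x \mapsto$ class of $V^n_{[x]}$, and check it is well-defined, i.e. that it respects all the relations defining $G(A)$ (the additivity relations of \ref{properties}(2), the sign relations $V^n_r + V^n_{-r}$, the $p$-saturation, and the passage to the torsion-free quotient and completion). The point is that modulo $G_I(\Z[A])$ one has $[x+y] \equiv [x]+[y]$, $[-x]\equiv -[x]$, $[xy]\equiv$ a polynomial expression in the $[\cdot]$'s, etc., since all these congruences hold in $\Z[A]$ modulo $I$; consequently each defining relation of $G(A)$ maps into $G_I(\Z[A])$. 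Continuity and compatibility with $V$ are immediate from the formulas. Then $\varphi$ and $\psi$ are mutually inverse on generators, hence (by continuity and density of the generators) mutually inverse, giving $C(A) \cong G(A)$. The final clause, that $C(A)$ is $p$-torsion free, is then automatic: by construction $G(R)$ is the quotient of $G^0(R)$ by the closed subgroup generated by $p$-power torsion, so $G(A)$ — and hence $C(A)$ — has no $p$-torsion (one should note here that a completed quotient of this form is genuinely $p$-torsion free, which is where one invokes that the torsion subgroup removed is closed, as in the cited \cite[chap IX]{bour}-style arguments).

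I expect the main obstacle to be the careful bookkeeping in showing $\psi$ is well-defined — specifically, tracking that the various relators defining $G(A)$ (which involve $(x+y)^p$, $x^p$, products, and the $p$-saturation step) really do land in the closed subgroup $G_I(\Z[A])$ after lifting $x \mapsto [x]$. The subtlety is that $[x]^p \neq [x^p]$ in $\Z[A]$, but $[x]^p - [x^p] \in I$, so $V^n_{[x]^p}$ and $V^n_{[x^p]}$ differ by an element of $G_I$; one must verify that the images of the additivity relators, which are $\Z$-linear combinations of such $V^n_{(\cdots)}$ terms, collapse correctly. A secondary point requiring care is that all these identifications must be shown to be compatible with the completion and with the quotient by $p$-power torsion, so that the maps are defined on $C(A)$ and $G(A)$ and not merely on the uncompleted models $\tilde G/\tilde H$; this is handled by noting that all maps in sight are continuous for the $V$-adic topology and that $V$-completion and the torsion-quotient are functorial, so the constructions pass to the limit without incident.
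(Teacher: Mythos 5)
Your first map $\varphi: C(A)\to G(A)$ (induced by $G(\pi)$ after checking it kills $G_I(\Z[A])$) is fine and agrees with the paper's retraction. The gap is in the inverse. You define $\psi: G(A)\to C(A)$ by $V^n_x\mapsto V^n_{[x]}$ on generators and propose to verify the relations; the additivity and sign relators do land in $H(\Z[A])+G_I(\Z[A])$ as you say, but the construction of $G(A)$ also involves (i) replacing $H(A)$ by its $p$-saturation $\tilde H(A)$ and (ii) quotienting the completion by the closed subgroup generated by $p$-power torsion. To descend $\psi$ through (i) you must show that $p^\ell\alpha\in H(A)$ implies $\psi(\alpha)=0$ in $C(A)$; the relator computation only gives $p^\ell\psi(\alpha)=0$, and to conclude $\psi(\alpha)=0$ you need $C(A)$ to be $p$-torsion free --- which is exactly the ``in particular'' clause of the lemma, and which you in turn deduce from the isomorphism $C(A)\cong G(A)$. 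The same circularity recurs at step (ii). Functoriality of completion and of the torsion quotient does not repair this: a homomorphism out of $G^0(A)$ descends to the quotient by (the closure of) its $p$-power torsion only when the target has no such torsion. A telling symptom is that your argument never uses that $A$ is free; if it were valid as written it would give $C(R)\cong G(R)$ for every commutative ring $R$, which should make you suspicious, since the whole point of defining $C$ through the presentation $\Z[R]\to R$ is that this is not expected in general.

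The paper avoids all of this with the one observation you explicitly talked yourself out of: because $A=\Z[S]$ is free, the surjection $\pi:\Z[A]\to A$ admits a section $j:A\to\Z[A]$ that \emph{is} a ring homomorphism --- not the tautological set-theoretic section $x\mapsto[x]$, but the ring map determined by $s\mapsto[s]$ for $s\in S$. Then $G(j):G(A)\to G(\Z[A])$ exists by pure functoriality, with no relations, saturation, or torsion to check; the composite $\phi$ with the quotient map $G(\Z[A])\to C(A)$ is injective because $\pi\circ j=\mathrm{id}_A$ makes $G(\pi)$ induce a retraction of $\phi$, and surjective because $j\pi(a)-a\in I$ for all $a\in\Z[A]$, so $G(j\pi)(\alpha)\equiv\alpha\bmod G_I(\Z[A])$ by continuity. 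That is the step where freeness of $A$ is genuinely used, and it is the missing idea in your write-up.
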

\begin{proof}
Since $A$ is a free algebra the natural surjection $\pi: \Z[A] \to A$ has a section which is also a ring homomorphism. We denote this section as $j : A \to \Z[A]$. Then we have a commutative diagram 

\[
\begin{tikzcd}
  0 \arrow[r] & 0 \arrow[d] \arrow[r] & A \arrow[d, "j"] \arrow[r, "id"] & A \arrow[d, "id_A"] \arrow[r] & 0 \\
  0 \arrow[r] & I \arrow[r] & \Z[A] \arrow[r, "\pi"] & A \ar[r] & 0.
\end{tikzcd}
\]

\noindent By functoriality of $G$, we have a group homomorphism $$G(j): G(A) \to G(\Z[A])$$ 
Composing with the quotient map, we get a morphism $$\phi: G(A) \to C(A)$$
It suffices to show that $\phi$ is an isomorphism.  The injectivity of $\phi$ is straight forward because $\pi \circ j = id_A$ and hence $\phi$ has a retract induced by $G(\pi)$. For surjectivity we note that for $\alpha \in G(\Z[A])$, $\phi(\beta)=\alpha$ where $\beta:=G(\pi)(\alpha)$. 
\end{proof}

\noindent We now state a very important property of the functor $C$, which we will prove in the next section.
\begin{theorem} For $p\neq 2$, $C$ is a universal pre-Witt functor. 
\end{theorem}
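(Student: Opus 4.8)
The plan is to prove that $C$ is a universal pre-Witt functor by reducing it to the already-announced fact that $C$ is a universal \emph{weak} pre-Witt functor (Theorem \ref{universalweakprewitt}) together with the identification $C \cong E$ (Theorem \ref{cise}, to be proven in the next section). Concretely, I would first verify that $C$ is genuinely a pre-Witt functor: properties \ref{properties}(1)--(3) are built into the definition of $C$ (the relation $V^n_r + V^n_{-r}$ in $H(R)$ gives $\ab{-x} = -\ab{x}$ for $p \neq 2$; the additivity relations in $H(R)$ give \ref{properties}(2); and $C(R)$ is a quotient of a $V$-complete group by a closed subgroup, hence $V$-complete, giving \ref{properties}(3)). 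The only nontrivial point is \ref{properties}(4), the preservation of $p$-torsion-freeness. This is exactly where the isomorphism $C \cong E$ enters: by Lemma \ref{property4}, $E$ preserves $p$-torsion-freeness, so transporting this through $C \cong E$ shows $C$ does too. Hence $C$ is a pre-Witt functor.

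Next I would establish universality. Let $F$ be any pre-Witt functor. Since every pre-Witt functor is in particular a weak pre-Witt functor (the weak version of \ref{properties}(4) being implied by the genuine one), Theorem \ref{universalweakprewitt} supplies a unique natural transformation $C \to F$ compatible with $V$ and $\ab{\ }$. This gives $C$ the universal property \emph{among} pre-Witt functors as well, since the target $F$ ranges over a subclass. Thus $C$ is a universal pre-Witt functor.

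Finally, to conclude that the \emph{classical} Witt vector functor $W$ is the universal pre-Witt functor (Theorem \ref{univcom}), I would combine the above with two facts already available in the excerpt: $E \cong W$ as functors $\Com \to \mathsf{Ab}$ (this is \cite[page 561]{cd}, used already in the proof of Lemma \ref{everyxa}), and $C \cong E$ (Theorem \ref{cise}). Composing, $C \cong W$, and since universality is preserved under natural isomorphism (one transports the universal map along the isomorphism and checks compatibility with $V$ and $\ab{\ }$, which is routine because the isomorphisms respect these structures), $W$ inherits the universal property.

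The main obstacle in this argument is \emph{not} in the present statement itself but is deferred: the proof rests entirely on Theorem \ref{cise}, the isomorphism $C \cong E$, whose proof occupies the fourth section. The delicate direction there will be constructing the map $C \to E$ — one must check that the defining relations of $C$ (the additivity relations, the sign relation, and crucially the $p$-saturation $\tilde H(R)$ and the killing of $p$-power torsion in forming $G(R)$) all map to $0$ in $E(R)$ — and then proving this map is an isomorphism, presumably by exhibiting it on a polynomial ring first (where Lemma \ref{ceqg} gives $C(A) \cong G(A)$ and Lemma \ref{property4} gives $E(A) = X(A)$, both $p$-torsion free) and then descending to arbitrary $R$ via the free presentations $0 \to I \to \Z[R] \to R \to 0$ used to define both $C$ and $E$. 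Once $C \cong E$ is in hand, the present statement is a short formal consequence.
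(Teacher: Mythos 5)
Your proposal is correct and follows essentially the same route as the paper: the paper also defers this statement, deriving it from Theorem \ref{universalweakprewitt} (universality among weak pre-Witt functors, which suffices since every pre-Witt functor is a weak one) together with Theorem \ref{cise} ($C \cong E$), which supplies the missing property \ref{properties}(4) via Lemma \ref{property4}. Your outline of how Theorem \ref{cise} is proved (polynomial case via Lemma \ref{genrelxa}, then descent along the free presentation) also matches the paper's Section 4.
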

\noindent Unfortunately, it appears less straightforward to check that $C$ actually satisfies the property \ref{properties}(4) and is a pre-Witt functor. To overcome this hurdle, we introduce a notion of `weak pre-Witt' functor below, whose role is somewhat temporary (see Theorem \ref{cise}).
\begin{definition}[weak pre-Witt functor]\label{defweakpre} A functor $F: \Com \to  \mathsf{Ab}$ is said to be a weak pre-Witt functor if it satisfies the properties (1), (2) and (3) of \ref{properties} and the following property, which is a weaker version of  \ref{properties}(4): \\

$(4')$ $A$ is  free (i.e. a polynomial ring)  $\implies F(A)$ is $p$-torsion free.
\end{definition} 
\vspace{1mm}
\noindent Any pre-Witt functor is a weak pre-Witt functor. In particular, the functors $W$ and $E$ are weak pre-Witt functors. 
 
\begin{theorem}\label{universalweakprewitt} Let $p\neq 2$. The functor $C: \Com \to Ab$ is a universal weak pre-Witt functor. In particular, there is a natural transformation $C\xrightarrow{\eta} E$ which is compatible with $V, \ab{ \ }$.
\end{theorem}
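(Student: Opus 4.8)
The plan is to verify that $C$ is a weak pre-Witt functor and then establish its universality among such functors. For the first part, properties (1), (2) of \ref{properties} are essentially built into the construction: the relations $V^n_r + V^n_{-r}$ in $H(R)$ force $\ab{-x} = -\ab{x}$, and the relations involving $(V^n_{(x+y)^p} - pV^{n-1}_{x+y}) - (V^n_{x^p} - pV^{n-1}_x) - (V^n_{y^p} - pV^{n-1}_y)$ force additivity of $x \mapsto V\ab{x^p} - p\ab{x}$. Property (3), $V$-completeness, holds because $G^0(R)$ is defined as a $V$-completion and then $G(R)$ is a quotient by a \emph{closed} subgroup, which remains complete by \cite[chap IX, §3.1, prop. 4]{bour}; the passage from $G(\Z[R])$ to $C(R)$ is again a quotient by the closed subgroup $G_I(\Z[R])$, so completeness is preserved. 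Property $(4')$ is exactly Lemma \ref{ceqg}: when $A$ is a polynomial ring, $C(A) \cong G(A)$, and $G(A)$ is $p$-torsion free by its very construction (we quotiented $G^0(A)$ by the closed subgroup of $p$-power torsion). So $C$ is a weak pre-Witt functor.

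For universality, I would argue in two stages, mirroring the two-step construction $R \rightsquigarrow \Z[R] \rightsquigarrow R$. Let $F$ be any weak pre-Witt functor with its operator $V$ and map $\ab{\ }$. First, for a polynomial ring $A = \Z[S]$, define a map $\tilde{G}(A) \to F(A)$ on generators by $V^n_r \mapsto V^n\ab{r}$. This is well-defined on the free group $\tilde{G}(A)$; it kills $H(A)$ because $F$ satisfies (1) and (2); and since $F(A)$ is $p$-torsion free (property $(4')$), it kills the $p$-saturation $\tilde{H}(A)$ as well. Thus we get $\tilde{G}(A)/\tilde{H}(A) \to F(A)$, which is $V$-equivariant, so it extends to the $V$-completions, giving $G^0(A) \to \widehat{F(A)}$; but $F(A)$ is already $V$-complete by (3), so we land in $F(A)$. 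Finally the $p$-power torsion subgroup of $G^0(A)$ must map into the $p$-power torsion of $F(A)$, which is zero, so the map factors through $G(A) = C(A)$. This gives a canonical $\eta_A: C(A) \to F(A)$ compatible with $V$ and $\ab{\ }$, and it is the unique such map since the $V^n\ab{r}$ topologically generate $C(A)$. Second, for a general $R \in \Com$ with presentation $0 \to I \to \Z[R] \to R \to 0$, the composite $C(\Z[R]) \xrightarrow{\eta_{\Z[R]}} F(\Z[R]) \to F(R)$ kills the generators $V^n_x - V^n_y$ with $x - y \in I$ (because $\ab{x}$ and $\ab{y}$ have the same image in $F(R)$ by functoriality of $\ab{\ }$ in $F$), hence kills the closed subgroup $G_I(\Z[R])$ they generate (using continuity), and so descends to $\eta_R: C(R) = G(\Z[R])/G_I(\Z[R]) \to F(R)$. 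Naturality in $R$ and compatibility with $V, \ab{\ }$ follow by chasing the construction, and uniqueness again follows from topological generation.

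The applications: taking $F = E$ yields the promised natural transformation $C \xrightarrow{\eta} E$ compatible with $V$ and $\ab{\ }$; taking $F = C$ itself and $F$ a general weak pre-Witt functor gives the universal property, where one should note the comparison map $C \to C$ is the identity by the uniqueness clause.

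The main obstacle I anticipate is the careful handling of topology and completions at each stage. One must check that $\eta_A$ on $\tilde{G}(A)/\tilde{H}(A)$ is continuous (i.e. $V$-equivariant, which is automatic) so that it extends uniquely to $G^0(A)$; that the closed subgroup of $p$-power torsion in $G^0(A)$ really does map to $0$ (this uses that $F(A)$ is $p$-torsion free \emph{and} Hausdorff in the $V$-topology, so that a \emph{closed} subgroup generated by torsion elements — not just the torsion elements themselves — maps to $0$); and, in the second stage, that $G_I(\Z[R])$ being a \emph{closed} subgroup is handled by the continuity of $C(\Z[R]) \to F(R)$ together with the Hausdorffness of $F(R)$. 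These are the points where the distinction between $\tilde{H}$ versus its closure, and between the subgroup generated by a set versus its closure, genuinely matters, and where an overly naive argument would have a gap.
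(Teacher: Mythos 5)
Your proposal is correct and follows essentially the same route as the paper: build the map on generators $V^n_a \mapsto V^n\ab{a}$ over the free algebra $\Z[R]$, descend successively through $H$, its $p$-saturation (using $(4')$), the $V$-completion (using (3)), the torsion quotient, and finally $G_I(\Z[R])$ after composing with $F(\pi)$. Your added attention to the closed-subgroup/Hausdorff subtleties and to uniqueness via topological generation is sound and in fact slightly more careful than the paper's own write-up.
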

\begin{proof} It follows from the construction of $C$ that, $C$ satisfies (1), (2) and (3) of \ref{properties}. Lemma \ref{ceqg} implies that the property $(4')$ of the Definition \ref{defweakpre} is also satisfied by $C$. Hence $C$ is a weak pre-Witt functor. It remains to show its universality. Let $F$ be any weak pre-Witt functor. For a commutative ring $R$, take the presentation $0 \to I \to \Z[R] \xrightarrow{\pi} R \to 0$. We define the homomorphism, 

$$\eta: \tilde{G}(\Z[R]) \to F(\Z[R]), \ \text{given by} \  V^n_{a}  \mapsto V^n\ab{a} \ \text{for all} \  a \in \Z[R], \ n \geq 0.$$ 

\vspace{1mm}
\noindent Since $F$ satisfies properties (1)-(2) of \ref{properties}, $\eta(\tilde{H}(\Z[R]))=0$ in $F(\Z[R])$. So, we get a homomorphism (again denoted by $\eta$),

 $$\eta:\tilde{G}(\Z[R])/\tilde{H}(\Z[R]) \to F(\Z[R]).$$ 
 
\vspace{1mm}
\noindent As $\eta$ is compatible with $V$, and $F$ satisfies \ref{properties}(3), taking $V$-completion, we get the transformation 
 $\eta: G^0(\Z[R]) \to F(\Z[R])$. Since $F(\Z[R])$ is $p$-torsion free, $\eta$ induces a map, which we continue to denote by $\eta : G(\Z[R])\to F(\Z[R])$. Now, consider the composition  $F(\pi) \circ \eta$, where $F(\pi): F(\Z[R]) \to F(R)$. This composition, still denoted by $\eta$, factors through $G_I(\Z[R])$. Indeed, for $a, b \in \Z[R]$ such that $a-b \in I, \eta(V^n_a-V^n_b)= V^n\ab{\pi(a)}-V^n\ab{\pi(b)} = 0$, so $\eta(G_I(\Z[R])) = 0$ in $F(R).$ Therefore, we get the natural transformation $\eta: C(R) \to F(R).$ Hence, $C$ is universal weak pre-Witt functor.
 \end{proof}
\vspace{1mm}
\section{Proof of main theorem \ref{univcom}}

\noindent In this section we will prove the main theorem \ref{univcom}. Note that, for $R \in \Com$, we have the canonical isomorphism $E(R) \to W(R)$ by \cite[Page 561]{cd}. Thus, in view of the above Theorem \ref{universalweakprewitt}, it is enough to prove that $C(R)$ is a pre-Witt functor and that $C(R) \cong E(R)$ for all $R \in \Com$ (see Theorem \ref{cise}).
The first step towards proving this is to show that, for a free commutative polynomial algebra $A$, $C(A) \cong X(A)$. The crucial observation for this is the Lemma \ref{zindcomm}. Before proving the Lemma \ref{zindcomm}, we will begin by stating an interesting fact about integers. 
\begin{lemma}\label{genvm}
Let $p$ be any prime and $\{c_1,c_2,...,c_n\}$ be distinct nonzero integers. 
Let
$$M(c_1,c_2,...c_n):= \left[\begin{matrix}
c_1 & c_2 & ... & c_n\\
c_1^p & c_2^p & ... & c_n^p\\
\vdots   \\
c_1^{p^{n-1}} & c_2^{p^{n-1}} & ... & c_n^{p^{n-1}}
\end{matrix}\right]$$
Then  columns of $M(c_1,...,c_n)$ are $\Z$-linearly independent in the following cases
\begin{enumerate}
\item $c_i$ are positive integers for  all $i$. 
\item $p\neq 2$ and $|c_i|$ are distinct, i.e. $c_i\neq -c_j$ for any $i\neq j$
\end{enumerate}
\end{lemma}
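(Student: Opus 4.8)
The plan is to reduce the $\Z$-linear independence of the columns to a statement about nonvanishing of a determinant-like quantity, and then analyze that quantity using the structure of $p$-power maps. First I would observe that it suffices to show that for any nontrivial integer relation $\sum_i \lambda_i \mathbf{m}_i = 0$ among the columns $\mathbf{m}_i = (c_i, c_i^p, \dots, c_i^{p^{n-1}})^t$, all $\lambda_i$ vanish. Equivalently, looking at the $n\times n$ matrix $M(c_1,\dots,c_n)$, it is enough to prove $\det M(c_1,\dots,c_n)\neq 0$; but since the exponents $1, p, p^2, \dots, p^{n-1}$ are not consecutive, this is not literally a Vandermonde determinant, so a direct Vandermonde formula is unavailable and some extra input is needed.

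The key idea I would use is a multiplicativity/telescoping trick. Write $q = p$ and note that the $j$-th row is obtained from the $(j-1)$-th by applying the (non-linear) map $t \mapsto t^q$ entrywise; so this is a "Vandermonde in the $q$-power sense." I would argue by induction on $n$. For the base case $n=1$, the single entry $c_1$ is nonzero. For the inductive step, suppose $\sum_{i=1}^n \lambda_i c_i^{p^k} = 0$ for all $0\le k\le n-1$. The trick is to consider the "difference" between consecutive power-sum rows: from $\sum_i \lambda_i c_i^{p^k}=0$ and $\sum_i \lambda_i c_i^{p^{k+1}} = 0$, one would like to eliminate one variable. A cleaner route: fix $c_n$ and consider $\mu_i := \lambda_i c_i^{p^k}$ viewed along $k$; alternatively, treat $c_i^{p}$ as a new variable $d_i$ — but the $d_i$ need not be distinct unless we know the $|c_i|$ are distinct, which is exactly where hypothesis (1) (all $c_i>0$, so $x\mapsto x^p$ injective on positives) or hypothesis (2) ($p$ odd, so $x\mapsto x^p$ injective on $\Z$) enters. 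Given distinctness of the $d_i = c_i^p$, the rows indexed $1,\dots,n-1$ form $M(d_1,\dots,d_n)$ scaled, and one can set up a clean induction. Concretely: the relation on rows $1$ through $n-1$ (i.e. exponents $p, p^2,\dots,p^{n-1}$) says the columns of $M(c_1^p,\dots,c_n^p)$ satisfy the relation $(\lambda_i)$, and by the inductive hypothesis applied to the still-distinct-and-the-right-sign integers $c_i^p$ we conclude all $\lambda_i=0$ — wait, that uses only $n-1$ rows for $n$ columns, which is not enough. So instead I would do a genuine elimination: use the $0$-th row $\sum \lambda_i c_i = 0$ to solve for, say, $\lambda_n$ in terms of the others, substitute into the remaining $n-1$ equations, and reorganize; after multiplying through appropriately this should produce a relation of the same shape among $n-1$ of the $c_i$'s (or among $c_i - c_n$ type expressions), to which induction applies.

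The subtlety — and the step I expect to be the main obstacle — is making the elimination step actually land back in the same form so that the induction closes, and this is where the precise hypotheses bite. After eliminating $\lambda_n$, the coefficients become $\lambda_i(c_i^{p^k} c_n - c_i c_n^{p^k})$ or similar, and one must factor out the right power of $c_i$ (nonzero) and argue that the residual factors are nonzero and "generic enough." In case (1), with all $c_i$ positive and distinct, $c_i^{p^{k-1}}$ versus $c_n^{p^{k-1}}$ comparisons are monotone, which gives the needed nonvanishing. In case (2), $p$ odd makes $t\mapsto t^p$ strictly increasing on all of $\Z$, so from $|c_i|$ distinct we get $c_i^p$ distinct with the same "sign-distinctness" preserved, again feeding the induction. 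The point where $p=2$ fails is precisely that $c$ and $-c$ have the same square, so $d_i = c_i^2$ may coincide and the reduction breaks — which matches the remark in the paper that $p\neq2$ is needed in Lemma \ref{zindcomm}. I would therefore organize the proof as: (i) reformulate as nonvanishing of $\det M$; (ii) induct on $n$, with the elimination step using the $p$-power injectivity granted by (1) or (2) to keep the reduced data in admissible form; (iii) check the base case and the nonvanishing of the residual factors. A cleaner alternative I would also try first is a $p$-adic valuation argument: if $p\nmid c_i$ for all $i$, pass to $\Z_p$ or $\F_p$ and note $c_i^{p^j}\equiv c_i \pmod p$ makes the reduction degenerate, so instead reduce modulo a prime $\ell$ for which the $c_i$ have multiplicatively independent images — but the explicit induction above is more robust and hypothesis-transparent, so that would be my primary route.
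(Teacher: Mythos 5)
Your reduction to $\det M \neq 0$ and your treatment of case (2) --- using that every exponent $p^k$ is odd, so replacing each $c_i$ by $|c_i|$ only multiplies columns by $\pm 1$ and reduces to the positive case --- match the paper. But the heart of the lemma is case (1), the nonvanishing of the generalized Vandermonde determinant $\det\bigl(c_i^{p^{j-1}}\bigr)$ for distinct positive integers $c_i$, and there your argument has a genuine gap which you flag yourself (``the main obstacle'') but never close. Carrying out the elimination you propose --- solve the $k=0$ row for $\lambda_n$ and substitute into row $k$ --- yields
$$\sum_{i=1}^{n-1} \lambda_i\, c_i\bigl(c_i^{p^k-1}-c_n^{p^k-1}\bigr)=0,\qquad k=1,\dots,n-1,$$
and the coefficient matrix of this reduced system is \emph{not} of the form $M(d_1,\dots,d_{n-1})$ for any integers $d_i$: the exponents $p^k-1$ are no longer powers of $p$, and the entries are differences of powers rather than pure powers. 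So the inductive hypothesis does not apply, and observing that each individual factor $c_i^{p^k-1}-c_n^{p^k-1}$ is nonzero by monotonicity says nothing about the nonsingularity of the reduced system. The induction does not close as stated.

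The paper disposes of case (1) by citing the classical fact (P\'olya--Szeg\H{o}) that for distinct positive reals and distinct real exponents $a_1<\dots<a_n$ one has $\det(x_i^{a_j})\neq 0$. The standard proof of that fact is the ingredient your elimination is groping toward, but in a different guise: if $f(x)=\sum_j\lambda_j x^{a_j}$ had $n$ distinct positive zeros, divide by $x^{a_1}$ and differentiate; Rolle's theorem produces a nontrivial combination of $n-1$ power functions with at least $n-1$ distinct positive zeros, contradicting the inductive hypothesis (Descartes' rule of signs for generalized polynomials). The induction must run on the number of zeros of a real function on $(0,\infty)$, not on rows of an integer matrix. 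Your alternative suggestion of reducing modulo a well-chosen prime $\ell$ is also not workable as stated, since one would need the determinant to be nonzero mod $\ell$, which presupposes what is to be proved. With the P\'olya--Szeg\H{o} input (or the Rolle argument spelled out) supplying case (1), your sign reduction for case (2) is correct and the proof is complete.
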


\begin{proof} Clearly columns of $M(c_1,...,c_n)$ are $\Z$-linearly independent iff $det(M(c_1,...,c_n))\neq 0$.
The first statement is a well known fact and follows from the exercise as given in \cite[page 43]{ps}. The second statement can be deduced from (1) as follows. Since $p$ is odd, $det(M(c_1,c_2,..,c_n))$ differs from $det(M(|c_1|,|c_2|,...,|c_n|)$ at most by a sign. The condition in (2) ensures that $|c_i|$ are distinct and now apply (1).
\end{proof}

\begin{lemma}\label{zindcomm} Let $p\neq 2$.
Let $A = \Z[S]$ be a commutative polynomial ring over a set $S$. Let $\{f_i\}_{i=1}^r$ be a finite set of distinct non-zero elements of $A$. Further assume that $f_i\neq -f_j$ for any $i\neq j$. Then the subset $\{\ab{f_i}\}_{i=1}^r$ of $X(A)$ is $\Z$-linearly independent. 
\end{lemma}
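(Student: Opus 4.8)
The plan is to reduce the $\Z$-linear independence of $\{\langle f_i\rangle\}_{i=1}^r$ in $X(A)$ to the nonvanishing of a Vandermonde-type determinant, which is exactly what Lemma \ref{genvm}(2) provides. Suppose we have a $\Z$-linear relation $\sum_{i=1}^r a_i \langle f_i\rangle = 0$ in $X(A) \subset A^{\N_0}$ with $a_i \in \Z$. Recalling that $\langle f_i\rangle = (f_i, f_i^p, f_i^{p^2}, \dots)$, this relation says that $\sum_i a_i f_i^{p^k} = 0$ in $A$ for every $k \geq 0$. The goal is to conclude all $a_i = 0$.

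First I would observe that it suffices to treat the case where $S$ is finite, since each $f_i$ involves only finitely many variables; so assume $A = \Z[x_1,\dots,x_m]$. Next, the key idea is to specialize the variables to integers: for a ring homomorphism $\varphi: A \to \Z$ sending each $x_j$ to some integer, we get $\sum_i a_i \varphi(f_i)^{p^k} = 0$ for all $k \geq 0$. If I can choose the specialization so that the integers $c_i := \varphi(f_i)$ are nonzero, distinct, and satisfy $c_i \neq -c_j$ for $i \neq j$, then taking $k = 0, 1, \dots, r-1$ gives a homogeneous linear system $M(c_1,\dots,c_r)^{T}(a_1,\dots,a_r)^{T} = 0$ whose matrix has nonzero determinant by Lemma \ref{genvm}(2) (using $p \neq 2$). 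Hence $a_i = 0$ for all $i$, as desired.

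The main obstacle is therefore producing such a specialization. We need an integer point $(n_1,\dots,n_m) \in \Z^m$ at which none of the finitely many polynomials $f_i$, $f_i - f_j$ (for $i \neq j$), and $f_i + f_j$ (for $i \neq j$) vanishes. Each of these is a nonzero element of $\Z[x_1,\dots,x_m]$: the $f_i$ are nonzero by hypothesis, the $f_i - f_j$ are nonzero since the $f_i$ are distinct, and the $f_i + f_j$ are nonzero since $f_i \neq -f_j$. A nonzero polynomial over $\Z$ (equivalently over the infinite integral domain, using that $\Z$ is infinite) cannot vanish on all of $\Z^m$; more carefully, a finite product of nonzero polynomials is a nonzero polynomial, and an infinite field (or the fraction field $\Q$) argument, or a direct induction on $m$ using that a nonzero single-variable polynomial has finitely many roots, shows the product has an integer point where it is nonzero. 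Picking such a point makes all the $c_i$ nonzero, pairwise distinct, and with distinct absolute values, completing the reduction. Finally I would remark that clause (1) of Lemma \ref{genvm} would let one drop the hypothesis $f_i \neq -f_j$ only if one could always specialize to positive integers, which is not possible in general (e.g. $f_1 = x$, $f_2 = -x^2$ forces a sign choice), which is precisely why the restriction $p \neq 2$ enters here.
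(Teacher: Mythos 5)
Your proposal is correct and follows essentially the same route as the paper: reduce to finitely many variables, specialize at an integer point where the product $\prod_i f_i\cdot\prod_{i<j}(f_i-f_j)(f_i+f_j)$ is nonzero, and invoke Lemma \ref{genvm}(2). (The only quibble is the harmless transpose on $M$ in your linear system, which does not affect the conclusion since $\det M = \det M^{T}$.)
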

\begin{proof}
As there are only finitely many variables in each $f_i$, we may assume $A=\Z[X_1,...,X_n]$. Since $\Z$ is an infinite integral domain, it is well known that a polynomial $f \in A$ is zero iff $f(\underline{a})=0$ for all $\underline{a}\in \Z^n$. Applying this to the polynomial $g:=\Pi_{i<j\leq r}(f_i-f_j)(f_i+f_j)\cdot \Pi_i f_i$, which is non-zero by hypothesis, we can choose an element $\underline{a}\in \Z^n$ such that $g(\underline{a})\neq 0$. This implies that $c_i:=f_i(a)$ are distinct nonzero integers such that $c_i\neq -c_j$ for $i\neq j$. The linear independence of $\ab{f_i}$ now directly follows by Lemma \ref{genvm}
\end{proof}

\begin{lemma} \label{genrelxa} Let $A=\Z[S]$ be a free polynomial algebra. 
There exists a canonical isomorphism  $C(A) \cong X(A)$. 
\end{lemma}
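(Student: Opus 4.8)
The plan is to construct mutually inverse maps between $C(A)$ and $X(A)$ and check they are well-defined. Since $A = \Z[S]$ is free, Lemma \ref{ceqg} gives $C(A) \cong G(A)$, so it suffices to produce a canonical isomorphism $G(A) \cong X(A)$. For one direction, recall that $X(A)$ is a pre-Witt functor (Lemma \ref{xisprewitt}), hence in particular a weak pre-Witt functor, so the universality in Theorem \ref{universalweakprewitt} supplies a natural transformation $\eta: C(A) \to X(A)$ compatible with $V$ and $\ab{\ }$; composing with $C(A)\cong G(A)$ gives a map $\psi: G(A) \to X(A)$ sending $V^n_a \mapsto V^n\ab{a}$. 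The harder direction is to build a map $X(A) \to G(A)$, or equivalently to show $\psi$ is bijective.

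First I would show $\psi$ is surjective: by Lemma \ref{everyxa}, every element of $X(A)$ is of the form $\sum_{n\geq 0} V^n\ab{a_n}$, and each such element is plainly $\psi$ of $\sum_n V^n_{a_n}$ (which makes sense in the $V$-complete group $G(A)$), so surjectivity is immediate. The main work is injectivity of $\psi$. Here is where Lemma \ref{zindcomm} enters. I would first treat the associated graded pieces: by $V$-completeness of both sides (Lemma \ref{ilimxa} for $X$, and the construction of $G$), it is enough to check that $\psi$ induces an injection on each quotient $V^n G(A)/V^{n+1}G(A) \to V^n X(A)/V^{n+1}X(A)$, and by $V$-equivariance this reduces to the bottom graded piece, i.e. to showing: if $\sum_i \ab{f_i}$ maps to zero in $X(A)/V X(A)$ with the $f_i$ a finite set of elements of $A$, then the corresponding combination of the $V^0_{f_i}$ already lies in $\tilde H(A)$ inside $G(A)$. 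Using the relations $V^n_r + V^n_{-r}$ in $H(A)$ and collecting terms, any such combination can be rewritten (up to elements of $\tilde H(A)$) as $\sum_i m_i \ab{f_i}$ with the $f_i$ distinct, nonzero, and satisfying $f_i \neq -f_j$; Lemma \ref{zindcomm} then forces all $m_i = 0$ provided the image in $X(A)$ vanishes, using that the map $X(A) \to X(A)/VX(A) \hookrightarrow A$ is simply $\sum V^n\ab{a_n}\mapsto a_0$ and the $\ab{f_i}$ are $\Z$-linearly independent in $X(A)$ itself. One must also rule out hidden torsion: since $C(A)$ is $p$-torsion free by Lemma \ref{ceqg} and $X(A)\subset A^{\N_0}$ is $p$-torsion free, the $p$-saturation step in the definition of $\tilde H$ creates no collapse beyond what the linear-algebra argument already controls.

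The main obstacle I anticipate is the bookkeeping of reducing a general element of $G(A)$ modulo $\tilde H(A)$ to a "normal form" $\sum_n V^n_{a_n}$ with the $a_n$ suitably normalized, so that Lemma \ref{zindcomm} applies cleanly level by level — in other words, showing that the relations defining $H(A)$ (the additivity relation for $x\mapsto V\ab{x^p}-p\ab{x}$ together with $\ab{r}+\ab{-r}$) are exactly enough, after $p$-saturation and $V$-completion, to cut $\tilde G(A)$ down to $X(A)$ and no further. The additivity relations mix levels $n$ and $n-1$ (they involve both $V^n_{(x+y)^p}$ and $V^{n-1}_{x+y}$), so the reduction to the bottom graded piece has to be done with care, propagating from low $V$-degree upward; this is the technical heart of the argument, and it is precisely the place where the hypothesis $p\neq 2$ (via Lemma \ref{zindcomm}(2)) is used. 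Finally, naturality of the isomorphism in $A$ follows since $\psi$ is built from the natural transformation $\eta$ and the canonical identifications, and the inverse is characterized by $\sum_n V^n\ab{a_n}\mapsto \sum_n V^n_{a_n}$, which is manifestly functorial in $A$.
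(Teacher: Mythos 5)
Your overall strategy matches the paper's: use universality of $C$ to get $\eta: C(A)\to X(A)$ with $V^n_a\mapsto V^n\ab{a}$, reduce to $G(A)$ via Lemma \ref{ceqg}, get surjectivity from Lemma \ref{ilimxa}/Lemma \ref{everyxa}, and use the linear independence of Lemma \ref{zindcomm} for injectivity. But your injectivity argument has a genuine gap, and one of its intermediate claims is false. Graded injectivity at level $0$ means: if $\psi(\alpha)\in VX(A)$ then $\alpha\in VG(A)$. You instead propose to show that if $\sum_i m_i\ab{f_i}$ vanishes in $X(A)/VX(A)$ then $\sum_i m_iV^0_{f_i}$ lies in $\tilde H(A)$, and to deduce $m_i=0$ from Lemma \ref{zindcomm}. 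This cannot work: vanishing in $X(A)/VX(A)$ only says $\sum_i m_if_i=0$ in $A$, which is perfectly compatible with nonzero $m_i$ (take $f_1=x+y$, $f_2=x$, $f_3=y$, $m=(1,-1,-1)$). In that example $\ab{x+y}-\ab{x}-\ab{y}$ is a nonzero element of $VX(A)$ (its second coordinate is $(x+y)^p-x^p-y^p$), so $V^0_{x+y}-V^0_x-V^0_y$ is certainly not in $\tilde H(A)$; the correct conclusion to aim for is that it lies in $VG(A)$, and establishing that from the relations is essentially as hard as the whole injectivity statement, precisely because the additivity relations mix levels $n$ and $n-1$. Lemma \ref{zindcomm} gives $m_i=0$ only when the image vanishes in $X(A)$ itself, a strictly stronger hypothesis than what your graded reduction supplies, and you silently switch between the two.

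What is missing is the device the paper uses to handle the level mixing you correctly flag as the technical heart. Given $\alpha=\sum_i c_iV^{n_i}_{a_i}$ in $\Ker\tilde\eta$ (for $\tilde\eta:\tilde G(A)\to\tilde X(A)$), the paper argues by induction on $\max_i n_i-\min_i n_i$. In the inductive step one multiplies by $p$ and rewrites $pV^0_{a}=-(V^1_{a^p}-pV^0_a)+V^1_{a^p}$; comparing first coordinates in $X(A)$ forces $\sum c_ia_i=0$ over the level-$0$ terms, so by the additivity relation defining $H(A)$ the combination $\sum c_i(pV^0_{a_i}-V^1_{a_i^p})$ lies in $H(A)$, and what remains of $p\alpha$ is supported in levels $\geq 1$, so induction applies; the factor of $p$ introduced at each step is exactly what the $p$-saturation $\tilde H(A)\supseteq H(A)$ is there to absorb. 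The base case, all $n_i=0$ with image zero in $X(A)$ (not merely in $X(A)/VX(A)$), is where Lemma \ref{zindcomm} is correctly applied after using the relations $V^0_r+V^0_{-r}$ to arrange the $a_i$ distinct with $a_i\neq -a_j$. Without this (or an equivalent) inductive mechanism, your proof does not close.
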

\begin{proof}
We know that $X(A)$ is a pre-Witt functor by Lemma \ref{xisprewitt}. Since $C$ is a universal weak pre-Witt functor there exists a natural transformation $\eta: C \to X$. For a polynomial algebra $A$, we have the group homomorphism $\eta: C(A) \to X(A)$ which is given by $V^n_a \mapsto V^n\ab{a}$.  By Lemma \ref{ilimxa}(1) this map is surjective. By Lemma \ref{ceqg} we know that $G(A)\cong C(A)$, thus composing with the group homomorphism $\tilde{G}(A)/\tilde{H}(A) \to G(A)$, we get the map 
 
$$\overline{\eta}: \tilde{G}(A)/\tilde{H}(A) \to G(A) \to X(A)$$
The image of $\overline{\eta}$ lands in $\tilde{X}(A)$. Thus we have the group epimorphism 
$$\tilde{\eta}:  \tilde{G}(A)/\tilde{H}(A) \to \tilde{X}(A);  \  \ \ V^n_a \mapsto V^n\ab{a} .$$

\noindent We will now prove the theorem in following steps.\\

\noindent {\underline{Step 1}:} To prove the theorem it is enough to show that $\tilde{\eta}$ is an isomorphism: This is because, if $\tilde{\eta}$ is an isomorphism then since $\tilde{\eta}$ is compatible with $V$, after taking the $V$-completion we get that $$G^0(A) \cong X(A).$$ Since $X(A)$ has no $p$-torsion, this implies, using Lemma \ref{ceqg} $$C(A) \cong G(A) \cong X(A).$$

\noindent Thus, it is enough to show that the group epimorphism $\tilde{\eta}$ is injective.  This we will prove in the next step.\\

\noindent {\underline{Step 2}:} To show that $\tilde{\eta}$ is injective, we in fact consider the map (again denoted by $\tilde{\eta}$);
$$\tilde{\eta}: \tilde{G}(A) \to \tilde{X}(A)$$
We will show that $\Ker(\tilde{\eta}) = \tilde{H}(A)$. It is easy to observe from the definition of $\tilde{H}(A)$ that $\tilde{H}(A) \subseteq \Ker(\tilde{\eta})$. \\

Let  $\alpha= \sum_{i=1}^\ell c_i V^{n_i}_{a_i} \in \Ker(\tilde{\eta})$. We will prove that $\alpha\in \tilde{H}(A)$, i.e. 
$p^k \alpha \in H(A)$ for some $k \geq 0$. We will establish this in following two steps.\\

\noindent {\underline{Step 2a}:} We first prove the case when $n_i=0$ for all $i$, i.e. 
$$ \alpha = \sum_{i=1}^\ell c_iV^0_{a_i}$$ 
We may assume without loss of generality that $a_i$ are all distinct. Moreover if $a_r=-a_s$ for some $r\neq s$ then 
$$ V^0_{a_r} = - V^0_{a_s} \ {\rm  mod} \ H(A).$$ Thus we may replace the expression $c_rV^0_{a_r}+c_sV^0_{a_s}$ with  $(c_s-c_r)V^0_{a_s}$. In particular we may assume that not only $a_i$ are distinct, but $a_i\neq -a_j$ for any $i\neq j$. In this case, $c_i$ must be zero for all $i$ by Lemma \ref{zindcomm}. Thus $\alpha=0$ and hence is in $\tilde{H}(A)$. \\

\noindent {\underline{Step 2b}:} For $n_i \geq 0$, we prove the claim by induction on $n_{\alpha}: = {\rm max}_i\ac{n_i}- {\rm min}_i \ac{n_i}$. Since $\tilde{X}(A) \xrightarrow{V} \tilde{X}(A)$ and $\tilde{H}(A)\xrightarrow{V} \tilde{H}(A)$ are both injective maps, we may assume without loss of generality that ${\rm min}_i \ac{n_i} = 0$.  The starting step of induction is proved in Step 1. Now by rearranging, we may assume that $n_i = 0$ for $1\leq i \leq s$ and $n_i>0$ for $s < i\leq \ell$. Thus without loss of generality we have 
$$ \alpha = \sum_{i=1}^s c_iV^0_{a_i} + \sum_{i>s}^\ell c_iV^{n_i}_{a_i}$$

\noindent We need to show $p^k \alpha \in H(A)$, for some $k \geq 0$.
\begin{align*}
p\alpha & = \sum_{i=1}^s pc_iV^0_{a_i} + \sum_{i>s}^\ell pc_iV^{n_i}_{a_i} \\ 
       & = \sum_{i=1}^s c_i (pV^0_{a_i}-V^1_{a_i^p}) +\sum_{i=1}^s c_iV^1_{a_i^p} + \sum_{i>s}^\ell pc_iV^{n_i}_{a_i} \\
            & = \beta + \sum_{i=1}^s c_iV^1_{a_i^p} + \sum_{i>s}^\ell pc_iV^{n_i}_{a_i} \\
            & =: \beta + \gamma 
\end{align*}
Since $\alpha \in \Ker(\tilde{\eta})$ we have 
$$p\tilde{\eta}(\alpha) = \tilde{\eta}(\beta) + \tilde{\eta}(\gamma) = (p\sum_{i=1}^sc_ia_i, \cdots ) = (0, 0, 0, \cdots).$$
This implies that $\sum_{i=1}^s c_ia_i = 0$ since $X(A)$ is p-torsion free.\\

\noindent We now claim that $\beta \in H(A)$: To prove this claim we will use the defining relations of $H(A)$.\\
 Consider the map  $\phi: A \to \tilde{G}(A)$ given by $ a \mapsto V^1_{a^p}-pV^0_{a}$ which is a linear map, i.e. 
 $$\phi(\sum_{i=1}^s c_ia_i) \equiv \sum_{i=1}^sc_i \phi(a_i) (\text{mod} \  H(A))$$
 Thus, $\beta \equiv 0 (\text{mod} \  H(A))$. This proves that $\beta \in H(A)$ and hence is in $\tilde{H}(A)$. Thus, $\beta \in \Ker(\tilde{\eta})$. This implies that $\gamma \in \Ker(\tilde{\eta})$. Moreover observe that $n_{\gamma} < n_{\alpha}$, hence by the induction hypothesis, $\gamma \in \tilde{H}(A)$.  Thus, $p^{r}\gamma \in H(A)$ for some $r \geq 0$. Thus, $p^{r+1}\alpha \in H(A)$. 
 Thus, $\Ker(\tilde{\eta}) \subseteq \tilde{H}(A)$. \\ 
 
\noindent This proves that $\tilde{G}(A)/ \tilde{H}(A) \cong \tilde{X}(A)$ and hence the theorem. 
\end{proof}

\begin{theorem}\label{cise}
The natural transformation $C \to E$ is an isomorphism of functors. In particular, $C$ is a pre-Witt functor.
\end{theorem}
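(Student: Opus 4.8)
The plan is to upgrade the isomorphism $C(A) \cong X(A)$ established in Lemma \ref{genrelxa} for free polynomial algebras $A$ to an isomorphism $C(R) \cong E(R)$ for all $R \in \Com$, by comparing the two constructions presentation-by-presentation. Recall that both $C(R)$ and $E(R)$ are defined as quotients of a functor evaluated on the free algebra $\Z[R]$: we have $C(R) = G(\Z[R])/G_I(\Z[R])$ and $E(R) = X(\Z[R])/X_I(\Z[R])$, where $I = \Ker(\Z[R] \to R)$. Since $\Z[R]$ is a free polynomial algebra, Lemma \ref{genrelxa} gives a canonical isomorphism $\Phi_{\Z[R]}: C(\Z[R]) \cong X(\Z[R])$, and by Lemma \ref{ceqg} we may identify $C(\Z[R])$ with $G(\Z[R])$. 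The key point is then to check that this isomorphism carries the subgroup $G_I(\Z[R])$ onto $X_I(\Z[R])$.

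First I would recall from Theorem \ref{universalweakprewitt} that since $E$ is a weak pre-Witt functor, there is a natural transformation $\eta: C \to E$ compatible with $V$ and $\ab{\ }$; this is the candidate isomorphism. By naturality, $\eta$ fits into a commutative square relating the presentation of $R$ to that of $\Z[R]$, and on free algebras $\eta$ agrees with the isomorphism $\Phi$ of Lemma \ref{genrelxa} (both send $V^n_a \mapsto V^n\ab{a}$, and $X(A) = E(A)$ for free $A$ by Lemma \ref{property4}). Next I would verify directly that under $\Phi_{\Z[R]}$ the generator $V^n_x - V^n_y$ of $G_I(\Z[R])$ (with $x - y \in I$) maps to $V^n\ab{x} - V^n\ab{y}$, which by Remark \ref{rem:xi} is exactly a generator of $X_I(\Z[R]) = X(I)$; conversely every generator of $X_I(\Z[R])$ arises this way. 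Since $\Phi_{\Z[R]}$ is an isomorphism and both $G_I$ and $X_I$ are the closed subgroups generated by these corresponding generating sets, $\Phi_{\Z[R]}$ induces an isomorphism on the quotients, i.e. $\eta_R: C(R) \xrightarrow{\sim} E(R)$. This uses that an isomorphism of complete topological groups carrying one generating set to another also matches the closed subgroups they generate.

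For the last sentence — that $C$ is therefore a pre-Witt functor — I would note that being a pre-Witt functor is a property invariant under natural isomorphism compatible with $V$ and $\ab{\ }$: properties \ref{properties}(1)–(3) transport along $\eta$ immediately, and \ref{properties}(4) (the $p$-torsion-freeness for $p$-torsion-free rings) holds for $E$ by Lemma \ref{property4}, hence for $C$ via the isomorphism. Combined with Theorem \ref{universalweakprewitt} (universality of $C$ among weak pre-Witt functors, which a fortiori gives the required universal maps to any pre-Witt functor), this yields that $C$ is a universal pre-Witt functor, and via $E(R) \cong W(R)$ (\cite[page 561]{cd}) completes the proof of Theorem \ref{univcom}.

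The main obstacle I anticipate is the bookkeeping around \emph{closed} subgroups and completions: one must be careful that $G_I(\Z[R])$ and $X_I(\Z[R])$ are closed, that $\Phi_{\Z[R]}$ is a homeomorphism (not merely an algebraic isomorphism) so that it respects topological generation, and that passing to the quotient by a closed subgroup commutes with the relevant $V$-completions — the subtlety being exactly the kind handled in Lemma \ref{ilimxa} and the citation to \cite[chap IX, §3.1, prop. 4]{bour} used in Lemma \ref{ertopgen}. A secondary point to handle cleanly is that the $p$-saturation and the "mod $p$-power torsion" steps in the definition of $G$ do not introduce any discrepancy, which is ultimately guaranteed by $X(A)$ being $p$-torsion free (Lemma \ref{property4}) as already exploited in Step 1 of Lemma \ref{genrelxa}.
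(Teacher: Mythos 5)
Your proposal is correct and follows essentially the same route as the paper: identify $G(\Z[R])\cong X(\Z[R])$ via Lemmas \ref{ceqg} and \ref{genrelxa}, check that this isomorphism matches the closed subgroups $G_I(\Z[R])$ and $X_I(\Z[R])$ generator-by-generator, and pass to the quotients; the pre-Witt property of $C$ is then transported from $E$ using Lemma \ref{property4}. The extra care you take about closedness and topological generation is a fair elaboration of what the paper asserts in one line.
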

\begin{proof} Let $A$ be a commutative polynomial ring. Since $E(A) \cong X(A)$ by \cite[Cor. 2.10]{cd} and $X(A) \cong C(A)$ by Lemma \ref{genrelxa}, we get that $C(A) \cong E(A)$. For a general commutative ring $R$, consider the presentation, $$0 \to I \to \Z[R] \to R \to 0.$$
The functor $C$ being universal, we have a commutative diagram with exact rows.	
\[
\begin{tikzcd}
0 \arrow[r] & G_I(\Z[R]) \arrow[r] \arrow[d] & G(\Z[R]) \arrow[r] \arrow[d, "\phi"] & C(R) \arrow[r] \arrow[d] & 0 \\
0 \arrow[r] & X_I(\Z[R]) \arrow[r]           & X(\Z[R]) \arrow[r]           & E(R) \arrow[r]           & 0
\end{tikzcd}
\]

\noindent Now by Lemma \ref{ceqg} and Lemma \ref{genrelxa}, the verticle map $$\phi: G(\Z[R]) \to X(\Z[R])$$ is an isomorphism. The definition of $G_I(\Z[R])$ and $X_I(\Z[R])$ shows that the preimage of $X_I(\Z[R])$ under $\phi$ is exactly $G_I(\Z[R])$.  Hence, the induced map $C(R) \to E(R)$ is an isomorphism. 
\end{proof}
\vspace{1mm}

\begin{proof}[Proof of Theorem \ref{univcom}] By Theorem \ref{universalweakprewitt} and Theorem \ref{cise} we know that $C$ is a universal pre-Witt functor. By the canonical isomorphism $E(R) \to W(R)$ \cite[Page 561]{cd} and by Theorem \ref{cise}, $W$ is a universal functor satisfying four properties in \ref{properties}. 
\end{proof}
\vspace{1mm}

\end{document}